\title{Correlation among runners and some results on the Lonely Runner Conjecture}
\author{Guillem Perarnau\thanks{Corresponding author}\phantom{.} and Oriol Serra\\
\phantom{.}\\
\emph{Departament de Matem\`atica Aplicada 4}\\
\emph{Universitat Polit\`ecnica de Catalunya, BarcelonaTech}\\
\emph{C/ Jordi Girona, 1-3. Edifici C3. 08034 Barcelona}}
\date{\today}
\theoremstyle{plain}
\newtheorem{theorem}{Theorem}%[section]
\newtheorem{lemma}[theorem]{Lemma}
\newtheorem{proposition}[theorem]{Proposition}
\newtheorem{corollary}[theorem]{Corollary}
\newtheorem{observation}[theorem]{Observation}
\theoremstyle{definition}
\newtheorem{conjecture}[theorem]{Conjecture}
\newtheorem*{acknowledgement}{Acknowledgement}
\theoremstyle{definition}
\renewenvironment{proof}[1][Proof]{\begin{trivlist}
\item[\hskip \labelsep {\textit{#1}.}]}{\qed\end{trivlist}}
\newenvironment{proofof}[1][Proofof]{\begin{trivlist}
\item[\hskip \labelsep {\textit{#1}}.]}{\qed\end{trivlist}}
\newcommand{\rst}[1]{\ensuremath{{\mathbin\upharpoonright}%
\raise-.5ex\hbox{$#1$}}} % creates restriction symbol
\newcommand{\E}{\mathbb{E}}
\newcommand{\eps}{\varepsilon}
\newcommand{\TT}{(0,1)}
\newcommand{\ZZ}{\mathbb{Z}}
\newcommand{\RR}{\mathbb{R}}
\begin{document}

\pagenumbering{arabic}

\setcounter{section}{0}

\maketitle

\onehalfspace

\thispagestyle{empty}

\begin{abstract}
The Lonely Runner Conjecture, posed independently by
Wills and by Cusick, states that for
any set of runners running along the unit circle with constant different speeds and starting at the
same point, there is a time where all of them are far enough from the origin. We study the
correlation among the time that runners spend close to the origin. By means of these correlations,
we improve a result of Chen on the gap of loneliness. In the last part, we introduce dynamic interval graphs to deal with
a weak version of the conjecture thus providing a new result related to the invisible runner theorem of
Czerwi{\'n}ski and Grytczuk.
\end{abstract}

{\small\textbf{Keywords:} Lonely Runner Conjecture}

\section{Introduction}

The Lonely Runner Conjecture was posed independently by Wills~\cite{w1967} in 1967    and
Cusick~\cite{c1982} in 1982. Its picturesque name comes from the following interpretation due to Goddyn \cite{bggst1998}.
Consider a set of $k$ runners  on the unit circle running with different constant speeds and
starting at the origin. The conjecture states that, for each runner, there is a time where she is at distance at least $1/k$ on the circle from all the other
runners.

For any real number $x$, denote by $\|x\|$, the distance from $x$ to the closest
integer
$$
\|x\| = \min \{x-\lfloor x\rfloor,  \lceil x\rceil -x\} \;,
$$
and by
$$
\{x\}=x-\lfloor x\rfloor,
$$
its fractional part.

By assuming that one of the runners has zero speed, the conjecture can be easily seen to be equivalent to the following one.

\begin{conjecture}[Lonely Runner Conjecture]\label{conj:LRC}
For every $n\geq 1$ and every set of nonzero speeds $v_1,\dots ,v_{n}$, there exists a time $t\in
\RR$ such that
$$
 \| tv_i\|\geq \frac{1}{n+1}\;,
$$
for every $i\in [n]$.
\end{conjecture}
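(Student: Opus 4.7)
The plan is to attack the conjecture by a measure-theoretic covering argument on the circle. A standard rational approximation reduces the problem to integer speeds $v_1,\dots ,v_n\in\ZZ_{>0}$ with common period $T=\mathrm{lcm}(v_1,\dots ,v_n)$. For each runner, define the \emph{bad set}
\[
B_i=\bigl\{t\in[0,T):\ \|tv_i\|<\tfrac{1}{n+1}\bigr\},\qquad B=\bigcup_{i=1}^{n} B_i,
\]
so that proving $B\neq [0,T)$ produces a witnessing time $t$. Each $B_i$ has Lebesgue measure exactly $\tfrac{2T}{n+1}$, hence the union bound only yields $|B|\le \tfrac{2n}{n+1}T<2T$. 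The strategy is to exploit correlations between runners to push the estimate below $T$.

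Pairwise overlaps $|B_i\cap B_j|$ would be computed via Fourier analysis on $\RR/T\ZZ$, expanding
\[
\mathbf{1}_{B_i}(t) \;=\; \tfrac{2}{n+1}+\sum_{k\neq 0}\widehat{\mathbf{1}_{B_i}}(k)\,e^{2\pi i k v_i t/T},
\]
so that $|B_i\cap B_j|/T$ reduces to a character sum whose surviving terms come from resonances $kv_i+\ell v_j=0$. These resonances are controlled by $\gcd(v_i,v_j)$, so runners sharing arithmetic structure automatically have boosted joint overlap. Feeding the estimates into the Bonferroni inequality
\[
|B|\;\le\;\sum_i |B_i|-\sum_{i<j}|B_i\cap B_j|+\sum_{i<j<k}|B_i\cap B_j\cap B_k|,
\]
the concrete target becomes showing that the pairwise-correlation sum exceeds $\tfrac{n}{n+1}T$, which would close Conjecture~\ref{conj:LRC}.

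To drive that sum up, I would partition the speeds by residue class modulo small primes: any prime $p$ dividing many $v_i$ produces a bundle of runners simultaneously near the origin at multiples of $T/p$, inflating their pairwise overlaps well beyond the independent value $4T/(n+1)^2$. One would then split into cases on how the $n$ speeds distribute across these residue classes, handling the arithmetically rich configurations by correlation and the arithmetically generic ones by an inductive reduction on $n$ combined with the known small-$n$ base cases.

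The genuine obstacle — and the reason Conjecture~\ref{conj:LRC} has resisted since 1967 — is the extremal family $v_i=i$: here pairwise overlaps essentially attain the independent value, higher-order correlations also degenerate, and $|B|$ approaches $T$ to within $\Theta(T/n)$. No bounded-order inclusion--exclusion truncation can distinguish this case from a hypothetical counterexample, so the plan above cannot succeed on correlation bounds alone. Any complete proof would need an extra arithmetic ingredient — for instance a structural classification of speed tuples that saturate the correlation estimates, ruled out by a separate modular argument in the spirit of the existing results for $n\le 7$ — and this is where I expect the real work, and the main obstacle, to lie.
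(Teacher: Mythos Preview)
The statement you are attempting is Conjecture~\ref{conj:LRC} itself --- the Lonely Runner Conjecture --- which is \emph{open}. The paper does not prove it and does not claim to; there is no ``paper's own proof'' to compare against. What the paper does is use precisely the correlation machinery you outline (exact computation of $\Pr(A_i\cap A_j)$ via Proposition~\ref{prop:4delta2}, then Hunter's Bonferroni-type inequality, Lemma~\ref{lem:bonf_local}, over a carefully chosen tree) to obtain the weaker gap $\delta=\tfrac{1}{2n-2+o(1)}$ of Theorem~\ref{thm:2}, roughly a factor of $2$ short of the conjectured $\tfrac{1}{n+1}$.

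Your proposal is therefore not a proof but a correct diagnosis of why this line of attack stalls. The paper's pairwise overlap formula confirms your Fourier heuristic: $\Pr(A_i\cap A_j)=4\delta^2+\tfrac{2(\gcd(v_i,v_j))^2 f(\eps_{ij},\eps_{ji})}{v_iv_j}$, and the error term can be slightly negative, so one cannot simply sum $4\delta^2$ over all pairs. The paper salvages $(1-\eps')4\delta^2$ along a spanning tree of ``good'' pairs (Lemma~\ref{prop:tree}), which is exactly enough to push the union bound from $\tfrac{1}{2n}$ to $\tfrac{1}{2n-2+o(1)}$ but no further. Your final paragraph is on the mark: for the extremal configuration $v_i=i$ the correlations are essentially at the independent value, and indeed the paper notes (equation~\eqref{eq:cille}, due to Cilleruelo) that $\E(X^2)\sim \tfrac{12}{\pi^2}\delta n\log n$ there, only a logarithm above the trivial bound. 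Second-order inclusion--exclusion cannot close the factor-of-two gap, and the paper makes no claim that it can.

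In short: your plan is the paper's plan, your obstruction is the real obstruction, and neither you nor the authors have a proof of the conjecture.
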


If true, the Lonely Runner Conjecture is best possible: for the set of speeds,
\begin{align}\label{eq:speeds}
 v_i & = i \quad \text{for every $i\in [n]$}\;,
 \end{align}
there is no time for which all the runners are further away  from
the origin than $\tfrac{1}{n+1}$. An infinite family of additional extremal sets for the conjecture
can be found in~\cite{gw2006}.

The conjecture is obviously true for $n=1$, since at some point $\|
tv_1\|=1/2$, and it is also easy to show that it holds for $n=2$.
Many proofs for $n=3$ are given
in the context of diophantine approximation (see~\cite{bw1972,c1982}). A computer--assisted proof
for $n=4$ was given by Cusick and Pomerance motivated by a view-obstruction problem in
geometry~\cite{cp1984}, and later Biena et al.~\cite{bggst1998} provided a simpler proof by
connecting it to
nowhere zero flows in regular matroids.  The conjecture was proved for $n=5$  by Bohmann, Holzmann
and Kleitman~\cite{bhk2001}. Barajas and Serra~\cite{bs2008} showed that the conjecture holds
for $n=6$ by studying the regular chromatic number of distance graphs.

In~\cite{bhk2001}, the authors also showed that the conjecture can be reduced to the case where all
speeds are positive integers and in the sequel we will assume this to be the case. In particular,
we also may assume that $t$ takes values on the $(0,1)$ unit interval, since if
$t\in\ZZ$, then $\| tv_i\|=0$ for all $i\in [n]$.

Czerwi{\'n}ski~\cite{c2011}  proved a strengthening of the
conjecture if all the speeds are chosen uniformly at random among all the
$n$-subsets of $[N]$. In particular, Czerwi{\'n}ski's result implies that, for almost all sets of
runners, as $N\to \infty$ there is a time where
all the runners are arbitrarily close to $1/2$. The dependence of $N$ with respect
to $n$ for which this result is valid was improved  by Alon~\cite{a2013} in the context
of colorings of Cayley graphs.

Dubickas~\cite{d2011} used a result of Peres and Schlag~\cite{ps2010} in lacunary integer
sequences to prove that the conjecture holds if the sequence of increasing speeds grows fast enough;
in particular, for  $n$ sufficiently  large, if
\begin{align}\label{eq:dubi}
\frac{v_{i+1}}{v_i} \geq 1+\frac{22\log{n}}{n}\;,
\end{align}
for every $1\leq i<n$.
These results introduce the use of the Lov\'asz Local Lemma to deal with the dependencies
among the runners.

Another approach to the conjecture is to reduce the \emph{gap of loneliness}. That is, to show that, for some fixed $\delta\leq \frac{1}{n+1}$ and  every set of nonzero speeds, there exists a time $t\in
\TT$ such that
\begin{align}\label{eq:delta}
 \| tv_i\| &\geq \delta \quad\text{for every $i\in [n]$}\;.
\end{align}

For this approach it is particularly useful to define the following sets,
$$
A_i= \{t\in \TT :\; \|tv_i\|<\delta \}\;.
$$
For every $t\in A_i$, we will say that the $i$--th runner is $\delta$--close to the origin at time
$t$. Otherwise, we will say that the runner is $\delta$--far from the origin at time $t$.

The set $A_i$ can be thought of as an event in the probability space $\TT$
with the uniform distribution. Notice that we have $\Pr(A_i)=2\delta$  independently
from the value of $v_i$. In this setting, if
\begin{align}\label{eq:bad_events}
\Pr\left(\bigcap_{i=1}^n \overline{A_i}\right)>0\;,
\end{align}
then, there exists a time $t$ for which~\eqref{eq:delta} holds.

Here it is also convenient to consider the indicator random variables $X_i$ for the events $A_i$.
Let $X=\sum_{i=1}^n X_i$ count the number of runners which are $\delta$--close from the origin at a
time $t\in (0,1)$ chosen uniformly at random. Then, condition~\eqref{eq:bad_events} is equivalent to
$\Pr(X=0)>0$.

A first straightforward result in this direction is obtained by using the union bound
in~\eqref{eq:bad_events}. For any $\delta<\frac{1}{2n}$, we have
\begin{align*}
\Pr\left(\bigcap_{i=1}^n \overline{A_i}\right)&\geq 1- \sum_{i=1}^n \Pr(A_i) = 1-2\delta n>0\;.
\end{align*}

This result was improved by Chen~\cite{c1994} who showed that, for every set of $n$ nonzero speeds,
there
exists a time $t\in \RR$ such that
\begin{align}\label{eq:chen}
\| tv_i\|\geq\frac{1}{2n-1+\tfrac{1}{2n-3}}\;,
\end{align}
for every $i\in [n]$.

If $2n-3$ is a prime number, then the previous result was extended by Chen and Cusick~\cite{cc1999}.
In this case, these authors  proved that, for every set of $n$ speeds, there exists a time $t\in \RR$ such
that
  $$
\| tv_i\|\geq\frac{1}{2n-3}\;,
  $$
for every $i\in [n]$. We give a seemingly simpler proof of this result in Section~\ref{sec:consec_lonely}.  Unfortunately, both proofs strongly use the fact that $2n-3$ is a prime. 

In order to improve~\eqref{eq:chen}, we exactly compute the pairwise join probabilities
$\Pr(A_i\cap A_j)$, the amount of time that two runners spend close to the origin at the same time.
As a corollary, we give the following lower bound on $\E(X^2)$.
\begin{proposition}\label{prop:variance}
	For every $\delta$ such that $\delta\to 0$ when $n\to +\infty$, we have
	$$
	\E(X^2) \geq 2\delta n\left(\delta\left(1+\Omega\left(\frac{1}{\log{\delta^{-1}}}\right)\right)n+ 1\right)\;.
	$$
\end{proposition}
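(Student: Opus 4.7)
Since $X_i$ is the $\{0,1\}$-indicator of $A_i$ with $\Pr(A_i)=2\delta$ and $X_i^2=X_i$, the second moment splits as
\[
\E(X^2) \;=\; \sum_{i=1}^n \Pr(A_i) + 2\sum_{1\le i<j\le n}\Pr(A_i\cap A_j) \;=\; 2\delta n + 2\sum_{i<j}\Pr(A_i\cap A_j),
\]
so the statement reduces to proving $\sum_{i<j}\Pr(A_i\cap A_j) \ge \delta^2 n^2 \bigl(1+\Omega(1/\log\delta^{-1})\bigr)$. The plan is then to derive an exact closed form for each pairwise joint probability, and to extract the claimed bound from the resulting sum.

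For a pair of speeds $(v_i,v_j)$, set $g=\gcd(v_i,v_j)$, $a=v_i/g$, $b=v_j/g$, and expand $\mathbf{1}_{\|x\|<\delta}$ in Fourier series on $\RR/\ZZ$, whose coefficients are $2\delta$ at frequency $0$ and $\sin(2\pi k\delta)/(\pi k)$ at $k\ne 0$. Multiplying the two expansions corresponding to $v_i$ and $v_j$, integrating in $t\in(0,1)$, and noting that cross terms vanish unless $kv_i+lv_j=0$ (which forces $(k,l)=(mb,-ma)$ by coprimality), one arrives, via the classical identity $\sum_{m\ge 1}\cos(2\pi mx)/m^2=\pi^2 B_2(\{x\})$ with $B_2(x)=x^2-x+1/6$, at
\[
\Pr(A_i\cap A_j) \;=\; 4\delta^2 + \frac{1}{ab}\Bigl[B_2\bigl(\{(b-a)\delta\}\bigr)-B_2\bigl(\{(b+a)\delta\}\bigr)\Bigr].
\]
I would sanity-check this formula geometrically: $t\mapsto(tv_i,tv_j)\bmod 1$ traces a closed line of slope $b/a$ on the torus $\RR^2/\ZZ^2$ that winds $g$ times, and the right-hand side is the fraction of this line lying inside the $(2\delta)\times(2\delta)$ square centred at the origin.

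Summing this formula over all pairs gives, with $C_{ij}:=\frac{1}{a_{ij}b_{ij}}\bigl[B_2(\{(b_{ij}-a_{ij})\delta\})-B_2(\{(b_{ij}+a_{ij})\delta\})\bigr]$,
\[
\sum_{i<j}\Pr(A_i\cap A_j) \;=\; 2\delta^2 n(n-1) + \sum_{i<j}C_{ij}.
\]
Since $2\delta^2 n(n-1)\ge \delta^2 n^2$ already for $n\ge 2$, it is enough to show $\sum_{i<j}C_{ij}\ge -\delta^2 n^2 + \Omega(\delta^2 n^2/\log\delta^{-1})$.

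I expect the main obstacle to be extracting the logarithmic improvement. In the small-$\delta$ regime $(a+b)\delta<1$, a direct expansion of $B_2$ gives $C_{ij}=-4\delta^2+2\delta/\max(a,b)$, which drops below $-2\delta^2$ precisely when $\max(v_i,v_j)/g_{ij}>1/\delta$. To handle this uniformly across speed sets, I would partition the pairs dyadically by the ratio $r_{ij}:=\max(v_i,v_j)/g_{ij}$, apply the small-$\delta$ asymptotics on the low-ratio scales where $r_{ij}\delta<1$, and use the boundedness $B_2\in[-1/12,1/6]$ together with the $1/(a_{ij}b_{ij})$ prefactor on the high-ratio scales. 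Only $O(\log\delta^{-1})$ dyadic scales $r_{ij}\in[2^s,2^{s+1})$ can contribute non-trivially (roughly those with $2^s\le 1/\delta$), and summing a worst-case bound over this $O(\log\delta^{-1})$ number of scales is what produces the $\Omega(1/\log\delta^{-1})$ surplus beyond the threshold $-\delta^2 n^2$.
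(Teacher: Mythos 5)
Your Fourier computation is sound: the closed form $\Pr(A_i\cap A_j)=4\delta^2+\frac{1}{ab}\bigl[B_2(\{(b-a)\delta\})-B_2(\{(b+a)\delta\})\bigr]$ is equivalent to Proposition~\ref{prop:4delta2} of the paper (with $2f(\eps_{ij},\eps_{ji})$ playing the role of the $B_2$-difference), and the reduction of the statement to a lower bound on $\sum_{i<j}\Pr(A_i\cap A_j)$ is correct. The gap is in the final step, which is where the whole content of the proposition lies. From the exact formula one only gets the uniform bound $C_{ij}\ge -2\delta^2$ (this is \eqref{eq:first bound}), and your dyadic plan supplies no mechanism that prevents essentially \emph{all} $\binom{n}{2}$ pairs from sitting in the ``bad'' range and each contributing $\approx -2\delta^2$: pigeonholing over the $O(\log\delta^{-1})$ scales of $r_{ij}=\max(v_i,v_j)/\gcd(v_i,v_j)$ tells you some scale contains many pairs, but pairs within one such scale need not be good, and ``summing a worst-case bound over the scales'' just reproduces $-2\delta^2\binom{n}{2}$, i.e.\ the trivial bound with no $\Omega(1/\log\delta^{-1})$ surplus. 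What the paper proves, and what your sketch is missing, is a structural statement about \emph{which} pairs must be good: a pair is $\eps$--good whenever the ratio $v_i/v_j$ is either close to $1$ or at least $(\gamma\delta)^{-1}$ (Lemma~\ref{lem:not_too_small}); among any $\sim\log\delta^{-1}$ speeds such a pair must exist (Lemma~\ref{lem:aux}); hence the graph of good pairs has independence number $O(\log\delta^{-1})$ and, by Tur\'an/Erd\H{o}s--Stone, at least $\Omega(n^2/\log\delta^{-1})$ edges. That positive \emph{fraction} of near-$4\delta^2$ pairs is exactly the source of the $\Omega(1/\log\delta^{-1})$ term, and nothing in your argument plays this role.

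A secondary problem: on the ``high-ratio scales'' you propose to use only $B_2\in[-1/12,1/6]$ with the $1/(ab)$ prefactor, which gives $C_{ij}\ge -\frac{1}{4ab}$. When one of $a,b$ is small (say $b=O(1)$ and $a\ge\delta^{-1}$, e.g.\ speeds $v_i=2^i$), this is only $-\Omega(\delta)$, weaker even than the trivial $-2\delta^2$, so it does not certify far-ratio pairs as good. You would instead need the near-cancellation of the two $B_2$-values, $\bigl|B_2(\{(b-a)\delta\})-B_2(\{(b+a)\delta\})\bigr|\lesssim \min(a,b)\,\delta$ (Lipschitzness of $B_2$, equivalently the bound $f\ge-\min(\eps_{ij},\eps_{ji})$ used in Case~A of Lemma~\ref{lem:not_too_small}). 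But even with that repair, the argument still lacks the combinatorial step described above, so as it stands the proposal does not prove the proposition.
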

Then, we are able to improve Chen's result on the gap of loneliness around the origin.
\begin{theorem}\label{thm:2}
For every sufficiently large $n$ and every set $v_1,\dots
,v_{n}$ of nonzero speeds there exists a time  $t\in
\TT$ such that
$$
 \| tv_i\|\geq\frac{1}{2n-2+o(1)}\;,
$$
for each $i\in [n]$.
\end{theorem}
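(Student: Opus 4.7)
The plan is to combine Proposition~\ref{prop:variance} with a third-order Bonferroni inequality. Writing $S_k = \E\binom{X}{k} = \sum_{i_1 < \cdots < i_k} \Pr(A_{i_1} \cap \cdots \cap A_{i_k})$, one has
\[
\Pr(X=0) \;=\; 1 - \Pr\left(\bigcup_{i=1}^{n} A_i\right) \;\geq\; 1 - S_1 + S_2 - S_3,
\]
and the goal is to exhibit $\delta = 1/(2n-2+o(1))$ for which the right-hand side is strictly positive; this immediately yields a $t \in (0,1)$ with $X(t)=0$, proving the theorem.

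Two of the three terms are under control. First, $S_1 = 2\delta n = 1 + O(1/n)$ for $\delta$ in our range. Second, Proposition~\ref{prop:variance} gives
\[
S_2 \;=\; \tfrac{1}{2}\bigl(\E(X^2) - \E(X)\bigr) \;\geq\; \delta^2 n^2\bigl(1+\Omega(1/\log \delta^{-1})\bigr),
\]
which for $\delta$ of order $1/(2n)$ is at least $\tfrac{1}{4} + \Omega(1/\log n) - O(1/n)$.

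The essential new ingredient is an upper bound on $S_3$. I would adapt the exact pairwise analysis that underlies Proposition~\ref{prop:variance} to the triple intersections $\Pr(A_i \cap A_j \cap A_k)$: in the ``generic'' regime each such probability is of order $(2\delta)^3$, so summing over all $\binom{n}{3}$ triples should yield $S_3 \leq 1/6 + o(1)$ for $\delta \sim 1/(2n)$, uniformly in the speeds. Granting this estimate, the three terms combine to
\[
1 - S_1 + S_2 - S_3 \;\geq\; \tfrac{1}{12} + \Omega(1/\log n) - o(1),
\]
which is strictly positive for all sufficiently large $n$. In particular $\Pr(X=0) > 0$ and the required time $t$ exists.

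The principal obstacle is this uniform upper bound on $S_3$. Proposition~\ref{prop:variance} handles pairwise correlations, but triple overlaps depend on the arithmetic of three speeds $(v_i,v_j,v_k)$ simultaneously, and non-generic triples whose speeds share sizable common factors can contribute more than the independent value $(2\delta)^3$. The plan is to isolate the leading generic contribution and bound the arithmetic corrections by a factor $1/\log \delta^{-1}$, in direct parallel with the pairwise situation of Proposition~\ref{prop:variance}.
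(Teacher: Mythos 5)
Your skeleton (the third-order Bonferroni inequality $\Pr(X=0)\ge 1-S_1+S_2-S_3$) is valid, and your use of Proposition~\ref{prop:variance} for $S_2$ is fine, but the step you yourself flag as the ``principal obstacle'' is not merely unproven: the estimate $S_3\le 1/6+o(1)$ uniformly in the speeds is false, and it fails exactly on the conjecturally extremal speed set. Take $v_i=i$ and $\delta=\Theta(1/n)$. By Cilleruelo's computation~\eqref{eq:cille}, $\E(X^2)=(1+o(1))\tfrac{12}{\pi^2}\delta n\log n=\Theta(\log n)$, so by Cauchy--Schwarz, $\E(X^2)\le (\E X)^{1/2}(\E X^3)^{1/2}$ with $\E X=2\delta n=\Theta(1)$ forces $\E(X^3)=\Omega(\log^2 n)$, hence $S_3=\tfrac16\E\bigl[X(X-1)(X-2)\bigr]=\Omega(\log^2 n)$, which swamps $S_2=\Theta(\log n)$ and makes $1-S_1+S_2-S_3$ very negative. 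The source of the blow-up is arithmetic, not generic: for triples linked by divisibility (e.g.\ $v_j\mid v_k$) the triple intersection has order $\delta$ rather than $\delta^3$ (compare~\eqref{eq:second bound}), and among consecutive integers there are far too many such triples. This is precisely the phenomenon recorded in remark 2 of Section~\ref{sec:consec_lonely}: the $k$-wise sums are \emph{not} $O(\delta^k n^k)$ in general, so no bound of the form ``leading term $+$ corrections of relative size $1/\log\delta^{-1}$'' can hold for $S_3$ uniformly in the speeds. Your plan therefore collapses on the hardest instances, where a correct proof must still work.

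The paper avoids any upper bound on higher-order (or even on all pairwise) terms by using Hunter's inequality (Lemma~\ref{lem:bonf_local}): for \emph{every} tree $T$ one has $\Pr\bigl(\bigcap_i\overline{A_i}\bigr)\ge 1-\sum_i\Pr(A_i)+\sum_{ij\in E(T)}\Pr(A_i\cap A_j)$, a one-sided bound in which large correlations can only help. One then \emph{chooses} the tree: Lemmas~\ref{lem:aux} and~\ref{lem:not_too_small} show that pairs with ratio either close to $1$ or at least $(\gamma\delta)^{-1}$ satisfy $\Pr(A_i\cap A_j)\ge(1-\eps')4\delta^2$, and a greedy construction (Lemma~\ref{prop:tree}) produces a spanning tree almost all of whose $n-1$ edges are such pairs, giving $\sum_{ij\in E(T)}\Pr(A_i\cap A_j)\ge(1-\eps')4\delta^2 n$ and hence positivity for $\delta\le 1/(2n-2+\eps)$. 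If you want to rescue your route, you would have to replace the symmetric truncation $1-S_1+S_2-S_3$ by such a one-sided, graph-restricted inequality; as written, the required uniform control of $S_3$ does not exist.
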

The proof of Theorem \ref{thm:2} uses a Bonferroni--type inequality due to Hunter \cite{hunter1976} (see Lemma~\ref{lem:bonf_local})
that improves the union bound with the knowledge of pairwise intersections. While the improvement of Theorem~\ref{thm:2} is modest, we point out that this is the best result up to date on the Lonely Runner Conjecture for a general sequence of speeds (provided that $n$ is large).

The bound on $\delta$ in Theorem \ref{thm:2} can be substantially improved in the case of sets of speeds taken from a sequence with divergent sum of
inverses. More precisely the following result is proven.

\begin{theorem}\label{thm:div} For every set  $v_1,\dots
,v_{n}$ of nonzero speeds there exists a time  $t\in
\TT$ such that
$$
 \| tv_i\|\geq\frac{1}{2\left(n-\sum_{i=2}^n \frac{1}{v_i}\right)}\;,
$$
for each $i\in [n]$.
\end{theorem}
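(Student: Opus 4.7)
The strategy is to refine the trivial union bound $|\bigcup_i A_i|\le 2\delta n$ by extracting a saving from the pairwise intersections $|A_1\cap A_i|$, where the index $1$ is chosen so that $v_1$ is a slowest runner. After relabelling so that $v_1=\min_i v_i$, set
\[
\delta=\frac{1}{2\bigl(n-\sum_{i=2}^{n}1/v_i\bigr)},
\]
so that $|A_i|=2\delta$ for every $i$. The aim is then to show that $(0,1)\setminus\bigcup_i A_i$ is nonempty.

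The key elementary estimate is
\[
|A_1\cap A_i|\ \ge\ \frac{2\delta}{v_i}\qquad\text{for every } i\ge 2,
\]
which I would prove by exhibiting two explicit subintervals of $A_1\cap A_i$: for $t\in(0,\delta/v_i)$ one has $tv_i\in(0,\delta)$, and since $v_1\le v_i$ also $tv_1\in(0,\delta v_1/v_i)\subset(0,\delta)$, so both runners lie within $\delta$ of the origin; the symmetric interval $(1-\delta/v_i,1)$ works the same way near $t=1$.

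Using this lower bound together with the decomposition $\bigcup_{i=1}^n A_i=A_1\cup\bigcup_{i=2}^n(A_i\setminus A_1)$ and subadditivity, one obtains
\[
\Bigl|\bigcup_{i=1}^n A_i\Bigr|\ \le\ |A_1|+\sum_{i=2}^n\bigl(|A_i|-|A_1\cap A_i|\bigr)\ \le\ 2\delta n-2\delta\sum_{i=2}^n\frac{1}{v_i}\ =\ 1,
\]
by the choice of $\delta$. For any $\delta'<\delta$ the same computation gives strict inequality, so $\bigcap_i\overline{A_i}$ has positive measure for that $\delta'$; a standard limit argument using the continuity of $t\mapsto\|tv_i\|$ on the compact interval $[0,1]$ then produces $t^\ast\in[0,1]$ with $\|t^\ast v_i\|\ge\delta$ for every $i$. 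Since $\delta>0$, $t^\ast$ must lie in $(0,1)$, as required.

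The one point to be careful about is the choice of $v_1$ as the slowest speed: the argument $tv_1<\delta$ is deduced from $tv_i<\delta$, and this is precisely what produces the sum $\sum_{i\ge 2}1/v_i$ in the denominator. With any other labelling the same idea only yields $|A_1\cap A_i|\ge 2\delta/\max(v_1,v_i)$, which gives a strictly weaker conclusion. I expect no other obstacles.
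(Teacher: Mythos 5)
Your proof is correct and takes essentially the same route as the paper: Theorem~\ref{thm:div} is there deduced from Hunter's inequality (Lemma~\ref{lem:bonf_local}) applied to the star tree centered at the slowest runner, together with the bound $\Pr(A_1\cap A_i)\ge \frac{\gcd(v_1,v_i)}{v_i}2\delta\ge \frac{2\delta}{v_i}$ from Corollary~\ref{cor:simple}, which is exactly your decomposition $\bigcup_i A_i=A_1\cup\bigcup_{i\ge 2}(A_i\setminus A_1)$ combined with your intersection estimate. The only differences are that you re-derive both ingredients directly (explicit subintervals near $t=0$ and $t=1$ instead of the $d(0)$ bound, and the trivial star case of Hunter's inequality instead of the general lemma), and that your limiting argument passing from $\delta'<\delta$ to the closed bound $\|t^\ast v_i\|\ge\delta$ makes explicit a point the paper glosses over.
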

The condition~\eqref{eq:dubi} of Dubickas~\cite{d2011} implies that the conjecture is true if the speeds grow sufficiently fast. Theorem~\ref{thm:div} is interesting in the sense that it provides meaningful bounds for the opposite case, that is when the speeds grow slowly. In particular, if $v_i, 1\leq i\leq n$ is a sequence of speeds satisfying $\sum_{i=1}^n
\frac{1}{v_i}=\omega(1)$, then there exists a time $t\in \TT$ such that
$$
 \| tv_i\|\geq\frac{1}{2n-\omega(1)}\;,
$$
for every $i\in [n]$. The last inequality holds under a natural density condition on the set of speeds which covers the more difficult cases where the speeds grow slowly.

Another interesting result on the Lonely Runner Conjecture, was given by Czerwi{\'n}ski
and Grytczuk~\cite{cg2008}.
We say that a runner $k$ is \emph{almost alone at time $t$} if there exists a $j\neq k$ such that
$$
 \| t(v_i-v_k)\|\geq \frac{1}{n+1}\;,
$$
for every $i\neq j,k$. If this case  we say that $j$ leaves  $k$ almost alone.

In~\cite{cg2008}, the authors showed that every runner is almost alone at some time. This means that Conjecture~\ref{conj:LRC} is true if we are allowed to
make one runner invisible, that is, there exists a time when all runners but one are far enough
from the origin.
\begin{theorem}[\cite{cg2008}]\label{thm:Inv}
For every $n\geq 1$ and every set of nonzero speeds $v_1,\dots ,v_{n}$, there exist a time $t\in
\TT$ such that the origin is almost alone at time $t$.

\end{theorem}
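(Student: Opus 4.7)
The plan is to reuse the probabilistic setup introduced above with the natural threshold $\delta=1/(n+1)$. Define $A_i=\{t\in\TT:\|tv_i\|<1/(n+1)\}$ and $X=\sum_{i=1}^n X_i$ as before. The origin is alone at time $t$ exactly when $X(t)=0$, and it is almost alone exactly when $X(t)\leq 1$: if $X(t)\leq 1$ there is at most one index $j$ with $t\in A_j$, and that $j$ (or any fixed runner, if $X(t)=0$) witnesses the almost-alone property because $\|tv_i\|\geq 1/(n+1)$ holds for every $i\neq j$. Hence it suffices to show that $\Pr(X\leq 1)>0$.

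The key observation is that the threshold $\delta=1/(n+1)$ gives
$$
\E(X)=2\delta n=\frac{2n}{n+1}<2.
$$
Since $X$ is a nonnegative integer-valued random variable, Markov's inequality (applied in the form $\E(X)\geq 2\Pr(X\geq 2)$) yields
$$
\Pr(X\geq 2)\leq \frac{\E(X)}{2}=\frac{n}{n+1}<1,
$$
so $\Pr(X\leq 1)\geq 1/(n+1)>0$. The measurable set $\{t\in\TT:X(t)\in\{0,1\}\}$ therefore has positive Lebesgue measure, and in particular is nonempty.

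Pick any such $t$. If $X(t)=0$ then all runners satisfy $\|tv_i\|\geq 1/(n+1)$, so the origin is actually alone, which is stronger than the required conclusion. If $X(t)=1$ then the unique index $j$ with $t\in A_j$ is the only runner close to the origin, and every $i\neq j$ satisfies $\|tv_i\|\geq 1/(n+1)$, so $j$ leaves the origin almost alone. In both cases the theorem holds.

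There is no real obstacle here: the entire content of the statement is the observation that relaxing ``alone'' ($X=0$) to ``almost alone'' ($X\leq 1$) raises the probabilistic threshold from $\E(X)<1$ to $\E(X)<2$, which is exactly the slack afforded by the optimal choice $\delta=1/(n+1)$ that the naive union bound cannot reach for the full Lonely Runner Conjecture.
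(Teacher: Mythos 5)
Your proof is correct as a proof of the statement in the form the paper gives it: with the standing reduction to positive integer speeds one has $\Pr(A_i)=2\delta$ exactly, so with $\delta=1/(n+1)$ indeed $\E(X)=2n/(n+1)<2$, Markov's inequality gives a positive-measure set of times with $X(t)\le 1$, and such a time is precisely one at which the origin is almost alone (or alone, which is stronger). Note, though, that the paper contains no proof of Theorem~\ref{thm:Inv} to compare against: it is quoted from Czerwi\'nski and Grytczuk~\cite{cg2008}. Your route is a direct first-moment argument on $X$, the number of runners $\delta$-close to the origin, and it is the same averaging philosophy as the paper's own dynamic-interval-graph argument for Theorem~\ref{thm:my_invisible} (Proposition~\ref{prop:invisibles}), where the expectation is taken instead over the edge count $\E\,|E(G(t))|=n-1$. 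The two buy different things: your computation shows that the invisible-runner statement at gap $1/(n+1)$ is exactly the slack between the union-bound requirement $\E(X)<1$ and the weaker requirement $\E(X)<2$, so it comes essentially for free; the graph-theoretic first moment (and the original argument of \cite{cg2008}) is what is needed for conclusions that averaging over closeness to the origin alone cannot reach, such as loneliness measured relative to a moving runner or several runners being almost alone simultaneously. Two minor presentational points: state explicitly that you are using the reduction to integer speeds (for non-integer real speeds $\Pr(A_i)$ need not equal $2\delta$ exactly), and note that the case $X(t)=0$ needs $n\ge 1$ so that some $j$ exists to name in the definition of ``almost alone'' --- both are immediate, but worth a sentence.
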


A similar result can be derived by using a model of dynamic circular interval graphs. By using this model we can show that either 
there is a runner alone at some time or at least four runners are almost alone at the same time.
\begin{theorem}\label{thm:my_invisible}
For every set of different speeds $v_1,\dots ,v_{n+1}$, there exist a time $t\in (0,1)$ such that, at time $t$, either there is a runner alone or four different runners are almost alone.
\end{theorem}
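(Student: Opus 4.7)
The plan is to encode the runners by the dynamic unit circular interval graph $G(t)$ on vertex set $\{1,\dots,n+1\}$, with $\{i,j\}$ an edge iff $\|t(v_i-v_j)\|<1/(n+1)$. Under this encoding a runner is alone at time $t$ iff its vertex is isolated in $G(t)$, and almost alone iff its vertex has degree at most $1$. The first step computes the time-average of the edge count: as the speeds are integers, the event $\|t(v_i-v_j)\|<1/(n+1)$ has Lebesgue measure exactly $2/(n+1)$ in $(0,1)$ for every nonzero integer speed difference, so linearity gives
\[
\int_0^1 |E(G(t))|\,dt \;=\; \binom{n+1}{2}\cdot\frac{2}{n+1} \;=\; n.
\]

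I would then argue by contradiction. Let $L(t)$ be the number of vertices of degree at most $1$ in $G(t)$, and suppose that no vertex is ever isolated and that $L(t)\leq 3$ for every $t\in(0,1)$. Then every vertex has degree at least $1$ with at most three of degree exactly $1$, and the degree-sum inequality reads
\[
2|E(G(t))| \;\geq\; L(t)+2(n+1-L(t)) \;=\; 2n+2-L(t) \;\geq\; 2n-1.
\]
Since $2|E(G(t))|$ is an even non-negative integer, this sharpens to $|E(G(t))|\geq n$ for every $t$; combined with the averaging identity, $|E(G(t))|=n$ for almost every $t\in (0,1)$.

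The contradiction is then obtained from the ``starting configuration'' near $t=0$. Setting $V=\max_{i\neq j}|v_i-v_j|$, on the nonempty open interval $(0,1/((n+1)V))$ every pair of runners satisfies $\|t(v_i-v_j)\|\leq t|v_i-v_j|<1/(n+1)$, so $G(t)$ is the complete graph on $n+1$ vertices and $|E(G(t))|=\binom{n+1}{2}$. For $n\geq 2$, $\binom{n+1}{2}>n$, producing a positive-measure set on which $|E(G(t))|$ strictly exceeds $n$ and contradicting $|E(G(t))|=n$ a.e.\ (the case $n=1$ is trivial, as two runners reach arc-distance $1/2$ at $t=1/(2|v_1-v_2|)$). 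Conceptually, a pure averaging/degree-sum argument gives only $L(t)\geq 1$ at some time, recovering Theorem~\ref{thm:Inv}; the strengthening to ``alone or four almost alone'' requires just one extra unit of slack in the inequality. The step I expect to be the main subtlety is the parity upgrade from $2|E(G(t))|\geq 2n-1$ to $|E(G(t))|\geq n$, because without exploiting the evenness of the degree sum the averaging yields no contradiction at all. Once that parity step is in place, the needed slack is furnished for free by the crowded initial configuration, and no finer analysis of the transitions of the dynamic graph is required.
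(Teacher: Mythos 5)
Your proposal is correct and follows essentially the same route as the paper's proof (Proposition~\ref{prop:invisibles}): the dynamic unit circular interval graph, the first-moment computation of the expected number of edges, and a degree-sum count forcing either an isolated vertex or at least four vertices of degree one. The only difference is organizational: you argue by contradiction and extract the needed slack explicitly from $G(t)=K_{n+1}$ near $t=0$ (together with an integrality remark), which is precisely what the paper's terse ``$Y(t)$ is not constant'' step provides.
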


The paper is organized as follows. In Section~\ref{sec:correl} we compute the pairwise join
probabilities for the events $A_i$ and give a proof for Proposition~\ref{prop:variance}. As a
corollary of these results, we also prove Theorem~\ref{thm:2} and Theorem~\ref{thm:div}~(Subsection~\ref{subsec:2}). In Section~\ref{sec:weakest} we
introduce an approach on the problem based on dynamic circular interval graphs and prove
Theorem~\ref{thm:my_invisible}. Finally, in Section~\ref{sec:consec_lonely} we give some
conclusions , discuss some open questions and give a proof of the improved bound $1/(2n-3)$ when $2n-3$ is a prime which uses a combination of the ideas presented in this paper and a technique from \cite{bs2008}.

\section{Correlation among runners}\label{sec:correl}

In this section we want to study the pairwise join probabilities $\Pr(A_i\cap A_j)$, for every
$i,j\in [n]$.
Notice first, that, if $A_i$ and $A_j$ were independent events, then we would have  $\Pr(A_{i}\cap
A_{j})=4\delta^2$, since $\Pr(A_i)=2\delta$ for every $i\in [n]$. This is not true in the general
case, but, as we will see later on, some of these pairwise probabilities can be shown to be large enough.

For each ordered pair $(i,j)$ with $i,j\in [n]$, we define
\begin{align}\label{eq:eps}
\eps_{ij}=\left\{\frac{v_i}{\gcd (v_i,v_j)}\delta\right\} \;,
\end{align}
where $\gcd (v_i,v_j)$ denotes the greatest common divisor of $v_i$ and $v_j$ and $\{\cdot\}$ is the fractional part.

Let us also consider the function $f: [0,1)^2 \to \RR$, defined by
\begin{align}\label{eq:func_f}
f(x,y)=\min(x,y) + \max(x+y-1,0) -2xy\;.
\end{align}

The proofs of Propositions~\ref{prop:4delta2_coprime} and~\ref{prop:4delta2} below are based on  the proofs of lemmas~$3.4$ and~$3.5$ in  Alon and Ruzsa~\cite{ar1999}.
Let us start by studying the case when the speeds  $v_i$ and $v_j$ are coprime.
\begin{proposition}\label{prop:4delta2_coprime}
 Let $v_i$ and $v_j$ be coprime positive integers and $0<\delta <1/4$.
Then
$$
\Pr (A_i\cap A_j)= 4\delta^2+\frac{2f(\eps_{ij},\eps_{ji})}{v_i v_j}\;.
$$
\end{proposition}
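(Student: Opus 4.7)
The plan is to compute
\[
\Pr(A_i \cap A_j) \;=\; \int_0^1 g_\delta(tv_i)\, g_\delta(tv_j)\, dt,
\]
where $g_\delta(x) = \mathbf{1}[\|x\| < \delta]$, by Fourier expansion. The periodic indicator admits the expansion
\[
g_\delta(x) \;=\; 2\delta + \sum_{m \neq 0} \frac{\sin(2\pi m \delta)}{\pi m}\, e^{2\pi i m x}.
\]
Expanding the product and integrating term by term, the cross-integral $\int_0^1 e^{2\pi i (k v_i + \ell v_j) t}\, dt$ vanishes unless $k v_i + \ell v_j = 0$. Because $\gcd(v_i, v_j) = 1$, the only solutions with $k, \ell \neq 0$ are $(k, \ell) = (m v_j, -m v_i)$ for nonzero $m$. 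The constant term contributes the baseline $4\delta^2$, and the surviving terms collapse to
\[
\Pr(A_i \cap A_j) \;=\; 4\delta^2 + \frac{2}{\pi^2 v_i v_j} \sum_{m=1}^{\infty} \frac{\sin(2\pi v_i m \delta)\, \sin(2\pi v_j m \delta)}{m^2}.
\]

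\textbf{Closed form.} Next I would apply $2\sin A \sin B = \cos(A-B) - \cos(A+B)$ and the classical identity
\[
\sum_{m=1}^{\infty} \frac{\cos(2\pi m x)}{m^2} \;=\; \pi^2 B_2(\{x\}), \qquad B_2(u) = u^2 - u + \tfrac{1}{6},
\]
which rewrites the correction as
\[
\frac{1}{v_i v_j}\Bigl[ B_2\bigl(\{(v_i - v_j)\delta\}\bigr) - B_2\bigl(\{(v_i + v_j)\delta\}\bigr) \Bigr].
\]
Since $v_i \delta \equiv \eps_{ij}$ and $v_j \delta \equiv \eps_{ji}$ modulo $1$, these two fractional parts equal $\{\eps_{ij} - \eps_{ji}\}$ and $\{\eps_{ij} + \eps_{ji}\}$, and the factorisation $B_2(u) - B_2(v) = (u - v)(u + v - 1)$ reduces the bracket to a polynomial in $\eps_{ij}$ and $\eps_{ji}$.

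\textbf{Main obstacle.} The last step is to check that this polynomial equals $2 f(\eps_{ij}, \eps_{ji})$. By the symmetry $B_2(x) = B_2(1 - x)$ one may replace $\{\eps_{ij} - \eps_{ji}\}$ by $|\eps_{ij} - \eps_{ji}|$ and then assume $\eps_{ij} \geq \eps_{ji}$. A direct computation shows the bracket equals $2\eps_{ji} - 4\eps_{ij}\eps_{ji}$ when $\eps_{ij} + \eps_{ji} < 1$, and gains an extra additive $2(\eps_{ij} + \eps_{ji} - 1)$ when $\eps_{ij} + \eps_{ji} \geq 1$; these are exactly the $\min$, the wrap-around term $\max(\eps_{ij} + \eps_{ji} - 1, 0)$, and the $-2\eps_{ij}\eps_{ji}$ correction that together form $f$. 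This case split is the substantive combinatorial step — the Fourier machinery being otherwise mechanical — and is where the specific form of $f$ emerges naturally. A purely geometric alternative would parametrise the overlapping pairs of arcs of $A_i$ and $A_j$ by $r = k v_j - \ell v_i \bmod v_i v_j$ (a bijection by coprimality) and sum the resulting triangular overlap profile, but the same case analysis reappears there as integer-part bookkeeping at the boundaries $|r|_{\mathrm{circ}} \approx \delta(v_i \pm v_j)$.
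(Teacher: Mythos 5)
Your proof is correct, but it takes a genuinely different route from the paper. The paper argues geometrically: it writes $A_i$ and $A_j$ as unions of $v_i$ (resp.\ $v_j$) arcs, uses coprimality to reduce to the single overlap profile $d(x)=\Pr\bigl(I\cap(J+x)\bigr)$ evaluated at the multiples of $1/v_iv_j$, and then does explicit bookkeeping with $\alpha v_iv_j=p+\eps_{ji}$, $\beta v_iv_j=q+\eps_{ij}$, the boundary terms of the triangular profile producing $f$ directly. You instead go through Fourier analysis: coprimality appears as the statement that the spectra of $t\mapsto g_\delta(tv_i)$ and $t\mapsto g_\delta(tv_j)$ meet only at multiples of $v_iv_j$, and the correction term collapses via $2\sin A\sin B=\cos(A-B)-\cos(A+B)$ and $\sum_{m\ge1}\cos(2\pi mx)/m^2=\pi^2B_2(\{x\})$ to $\bigl[B_2(\{\eps_{ij}-\eps_{ji}\})-B_2(\{\eps_{ij}+\eps_{ji}\})\bigr]/v_iv_j$; I checked your final case analysis and it does give $2f(\eps_{ij},\eps_{ji})$ in both cases ($2y-4xy$, plus the extra $2(x+y-1)$ when $x+y\ge1$, with $x=\eps_{ij}\ge y=\eps_{ji}$), so the identity closes. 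Two small remarks: (i) "expanding the product and integrating term by term" is cleanest justified by Parseval applied to the two $L^2$ functions, whose coefficients are supported on $v_i\ZZ$ and $v_j\ZZ$ respectively --- this avoids any discussion of rearranging a product of conditionally convergent series; (ii) your argument in fact works for all $0<\delta<1/2$, whereas the paper's hypothesis $\delta<1/4$ is only there to keep the profile $d(x)$ in its simple triangular form. What the paper's elementary route buys in exchange is self-containedness (no Fourier identities) and reusable intermediate quantities: the value $d(0)=2\delta/v_i'$ extracted from its proof is exactly what powers the second bound of Corollary~\ref{cor:simple}, which your spectral computation does not hand you directly (though it is easy to recover separately).
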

\begin{proof}
 Observe that $A_i$ and $A_j$ can be expressed as the disjoint unions of intervals
$$A_i= \bigcup_{k=0}^{v_i-1} \left(\frac{k}{v_i}-\alpha,\frac{k}{v_i}+\alpha\right)
\qquad A_j=\bigcup_{l=0}^{v_j-1} \left(\frac{l}{v_j}-\beta,\frac{l}{v_j}+\beta\right) $$
where $\alpha=\delta/v_i$ and $\beta=\delta/v_j$ and we consider the elements in $[0,1)$ modulo $1$. We observe that $\alpha+\beta=\delta(v_i+v_j)/v_iv_j<1/2$ since $\delta <1/4$.

Denote by  $I=(-\alpha,\alpha)$ and $J=(-\beta,\beta)$. We have
\begin{align*}
\Pr (A_i\cap A_j) &= \Pr \left(\bigcup_{k\leq v_i,l\leq v_j} (I+k/v_i)\cap (J+l/v_j)\right) \\
&=\sum_{k\leq v_i,l\leq v_j} \Pr((I+k/v_i)\cap (J+l/v_j))\\
&=\sum_{k\leq v_i,l\leq v_j}\Pr ( I\cap (J+l/v_j-k/v_i))\\
&= \sum_{k=0}^{v_iv_j-1} \Pr \left(I\cap (J+k/v_jv_i)\right)\;,
\end{align*}
where in the last equality we used the fact that $\gcd  (v_i,v_j)=1$.

For each $-1/2<x<1/2$, define  $d(x) = \Pr (I\cap (J+x))$. Let us assume that $v_j<v_i$. We
can write $d(x)$ as follows (see
Figure~\ref{fig:d(x)}):
\begin{align*}%\label {eq:density}
d(x)=\left\{\begin{array}{ll}
\beta+\alpha+ x,&x\in [-(\beta+\alpha),-(\beta-\alpha)]\\
2\alpha, & x\in  [-(\beta-\alpha),\beta-\alpha]\\
\beta+\alpha- x ,&x\in  [\beta-\alpha,\beta+\alpha]\\
0&\mbox{otherwise}
\end{array}\right.
\end{align*}

\begin{figure}[ht]
 \begin{center}
 \includegraphics[width=0.8\textwidth]{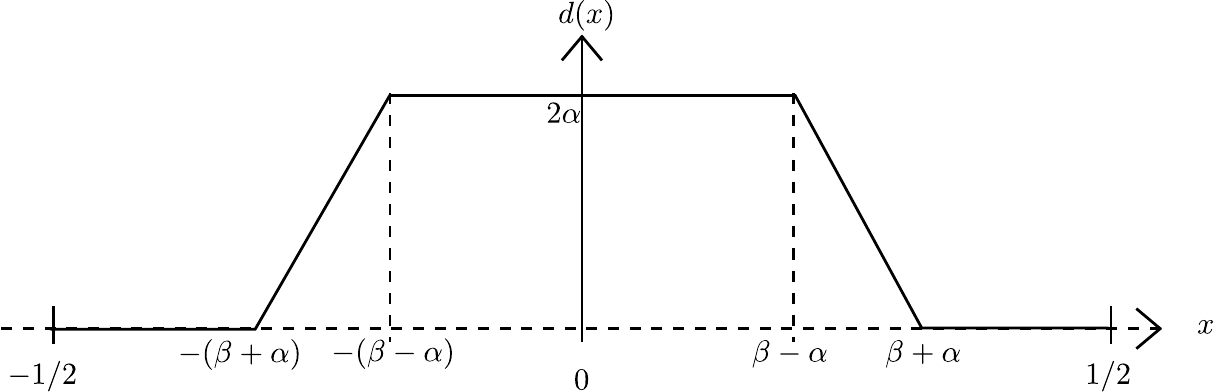}
 \end{center}
 \caption{Plot of $d(x)$ in $(-1/2,1/2)$.}
 \label{fig:d(x)}
\end{figure}

By symmetry, we have
$$
\frac{\Pr (A_i\cap A_j)}{2} = \frac{d(0)}{2} +  \sum_{1\le k\le (\beta+\alpha)v_iv_j}
d\left(\frac{k}{v_iv_j}\right)= \alpha + \sum_{1\le k\le (\beta+\alpha)v_iv_j} \min \left(2\alpha ,
\beta+\alpha-\frac{k}{v_iv_j}\right)\;.
$$

Write $\alpha v_iv_j= p+\eps_{ji}$ and $\beta v_iv_j= q+\eps_{ij}$, where $p$ and $q$ are integers and $0\leq \eps_{ji},\eps_{ij}<1$.

Observe that
$$
d\left(\frac{q-p}{v_iv_j}\right)v_iv_j=\left\{\begin{array}{ll}
2(p+\eps_{ji}), & \text{if }\eps_{ji}\leq \eps_{ij}\\
2p+\eps_{ji}+\eps_{ij}, &  \text{if }\eps_{ji}>\eps_{ij}\\
\end{array}\right. =  2p+ \eps_{ji}+\min(\eps_{ji},\eps_{ij})\;,
$$
and that
$$
d\left(\frac{q+p+1}{v_iv_j}\right)v_iv_j=\left\{\begin{array}{ll}
0, & \text{if }\eps_{ji}+\eps_{ij}\leq 1\\
\eps_{ji}+\eps_{ij}-1 ,&  \text{if }\eps_{ji}+\eps_{ij}> 1\\
\end{array}\right. =  \max(0,\eps_{ji}+\eps_{ij}-1)\;.
$$

Therefore,
\begin{align*}
 \frac{\Pr (A_i\cap A_j)}{2} v_iv_j&= p+\eps_{ji} + \sum_{1\le k \le p+q+\eps_{ji}+\eps_{ij}}
\min(2(p+\eps_{ji}),q+p+\eps_{ji}+\eps_{ij}-k)\\
&=p+\eps_{ji} +  \sum_{k=1}^{q-p-1}2(p+\eps_{ji}) + 2p+ \eps_{ji}+\min(\eps_{ji},\eps_{ij})\\
 & \qquad +  \sum_{k=q-p+1}^{p+q} (q+p+\eps_{ji}+\eps_{ij}-k) +  \max(0,\eps_{ji}+\eps_{ij}-1)\\
&= 2(p+\eps_{ji})(q+\eps_{ij}) +f(\eps_{ji},\eps_{ij})\;.
\end{align*}

Thus,
$$
\Pr (A_i\cap A_j)= \frac{2}{v_iv_j} \left( 2(p+\eps_{ji})(q+\eps_{ij}) +f(\eps_{ji},\eps_{ij})
\right) =
4\delta^2
+\frac{2f(\eps_{ji},\eps_{ij})}{v_iv_j} \;.
$$
\end{proof}

Proposition~\ref{prop:4delta2_coprime} can be easily generalized to pairs of speeds that are not coprime.
\begin{proposition} \label{prop:4delta2}
Let $v_i$ and $v_j$ be positive integers and $0<\delta <1$.
Then
$$
\Pr  (A_i\cap A_j)= 4\delta^2+\frac{2(\gcd (v_i,v_j))^2 f(\eps_{ji},\eps_{ij})}{v_iv_j}\;.
$$
\end{proposition}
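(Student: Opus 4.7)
The plan is to reduce Proposition~\ref{prop:4delta2} to the coprime case handled in Proposition~\ref{prop:4delta2_coprime} by a scaling argument. Let $d=\gcd(v_i,v_j)$ and write $v_i=dv_i'$, $v_j=dv_j'$ with $\gcd(v_i',v_j')=1$.

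The first step is to exploit periodicity. Since $v_i/d=v_i'\in\ZZ$, for any $t$ we have $\|(t+1/d)v_i\|=\|tv_i+v_i'\|=\|tv_i\|$, and likewise for $v_j$. Thus both $A_i$ and $A_j$, hence $A_i\cap A_j$, are invariant under translation by $1/d$, so
\[
\Pr(A_i\cap A_j)=d\cdot\mathrm{length}\bigl((A_i\cap A_j)\cap(0,1/d)\bigr).
\]

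The second step is a change of variables. Setting $s=dt$ gives a measure-preserving (up to the factor $d$) bijection between $(0,1/d)$ and $(0,1)$, and under this substitution
\[
\|tv_i\|=\|(s/d)\,dv_i'\|=\|sv_i'\|,\qquad \|tv_j\|=\|sv_j'\|.
\]
Therefore $(A_i\cap A_j)\cap(0,1/d)$ corresponds to $A_i'\cap A_j'$, where $A_i',A_j'\subset(0,1)$ are the analogous sets defined with the coprime speeds $v_i',v_j'$ and the same $\delta$. A direct computation gives
\[
\Pr(A_i\cap A_j)=d\cdot\tfrac{1}{d}\,\mathrm{length}(A_i'\cap A_j')=\Pr(A_i'\cap A_j').
\]

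The third step is to apply Proposition~\ref{prop:4delta2_coprime} to $v_i',v_j'$ (the hypothesis $\delta<1/4$ transfers unchanged, since $\delta$ is not rescaled). The $\eps$ values defined for the coprime pair $(v_i',v_j')$ are, by~\eqref{eq:eps},
\[
\eps'_{ij}=\bigl\{\tfrac{v_i'}{\gcd(v_i',v_j')}\delta\bigr\}=\{v_i'\delta\}=\bigl\{\tfrac{v_i}{d}\delta\bigr\}=\eps_{ij},
\]
and similarly $\eps'_{ji}=\eps_{ji}$. Hence
\[
\Pr(A_i'\cap A_j')=4\delta^2+\frac{2f(\eps_{ji},\eps_{ij})}{v_i'v_j'}=4\delta^2+\frac{2d^2 f(\eps_{ji},\eps_{ij})}{v_iv_j},
\]
which is the claimed formula. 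There is no real obstacle here beyond the bookkeeping; the substantive content was already packed into the coprime case, and the main point to check carefully is that the definition of $\eps_{ij}$ in~\eqref{eq:eps} was chosen precisely so that it is invariant under replacing $(v_i,v_j)$ by $(v_i/d,v_j/d)$, which makes the identification in the last step immediate.
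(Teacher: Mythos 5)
Your proposal is correct and follows essentially the same route as the paper: reduce to the coprime pair $v_i/\gcd(v_i,v_j)$, $v_j/\gcd(v_i,v_j)$ and apply Proposition~\ref{prop:4delta2_coprime}, noting that the $\eps$ quantities are unchanged. You merely spell out, via periodicity and the substitution $s=dt$, the identity $\Pr(A_i\cap A_j)=\Pr(A_i'\cap A_j')$ that the paper states as an observation.
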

\begin{proof}
 Consider $v_i'= \frac{v_i}{\gcd (v_i,v_j)}$ and $v_j'=\frac{v_j}{\gcd (v_i,v_j)}$.
Define $A'_i=  \{t\in \TT :\; \|tv'_i\|<\delta \}$ and $A'_j= \{t\in \TT :\; \|tv'_j\|<\delta \}$.
Observe that
$$
\Pr(A_i\cap A_j)=\Pr(A'_i\cap A'_j)\;.
$$
The proof follows by applying Proposition~\ref{prop:4delta2_coprime} to $v_i'$ and $v_j'$, which are
coprime.
\end{proof}

\begin{corollary}\label{cor:simple} For every $v_i$ and $v_j$ we have
\begin{equation}\label{eq:first bound}
\Pr(A_i\cap A_j)\ge 2\delta^2.
\end{equation}
Moreover,  if $v_j<v_i$, then
\begin{equation}\label{eq:second bound}
\Pr(A_i\cap A_j)\ge \frac{\gcd (v_i,v_j)}{v_i}2\delta\;.
\end{equation}
\end{corollary}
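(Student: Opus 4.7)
The plan is to derive both inequalities from Proposition~\ref{prop:4delta2}, which gives the exact identity
\[
\Pr(A_i\cap A_j) = 4\delta^2 + \frac{2g^2\, f(\eps_{ji},\eps_{ij})}{v_iv_j},
\]
with $g = \gcd(v_i,v_j)$; the two bounds arise from two different estimates of the correction term.

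For the first inequality I would establish the pointwise bound $f(x,y)\ge -xy$ on $[0,1)^2$. Assuming without loss of generality $x\le y$, one computes
\[
f(x,y) + xy = x + \max(x+y-1,0) - xy = x(1-y) + \max(x+y-1,0),
\]
which is non-negative since $x,y\in[0,1)$. Writing $v_i' = v_i/g$ and $v_j' = v_j/g$, the definition $\eps_{ij} = \{v_i'\delta\}$ gives $\eps_{ij}\le v_i'\delta$ and similarly $\eps_{ji}\le v_j'\delta$, so $\eps_{ji}\eps_{ij}\le v_i'v_j'\delta^2 = v_iv_j\delta^2/g^2$. Substituting,
\[
\frac{2g^2\, f(\eps_{ji},\eps_{ij})}{v_iv_j} \ge -\frac{2g^2\,\eps_{ji}\eps_{ij}}{v_iv_j} \ge -2\delta^2,
\]
and therefore $\Pr(A_i\cap A_j) \ge 4\delta^2 - 2\delta^2 = 2\delta^2$.

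For the second inequality I would bypass the formula and argue geometrically. On the circle $\RR/\ZZ$, $A_i$ is the disjoint union of $v_i$ open intervals of length $2\delta/v_i$ centered at the points $k/v_i$ for $0\le k<v_i$, and similarly $A_j$ is a disjoint union of $v_j$ open intervals of length $2\delta/v_j$ around the points $l/v_j$. A center $k/v_i$ coincides with some center $l/v_j$ precisely when $v_i'\mid k$, because $v_i'$ and $v_j'$ are coprime; there are exactly $g$ such values of $k$ in $\{0,\ldots,v_i-1\}$, yielding the common centers $0,1/g,\ldots,(g-1)/g$. Under the hypothesis $v_j<v_i$ we have $2\delta/v_j > 2\delta/v_i$, so around each common center the $A_i$-interval is contained in the corresponding $A_j$-interval, and hence in $A_i\cap A_j$. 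Summing over the $g$ pairwise disjoint contributions yields $\Pr(A_i\cap A_j) \ge g\cdot 2\delta/v_i = 2g\delta/v_i$.

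The argument is essentially elementary; the only mildly delicate step is the coprimality argument used to count the common centers, which is routine once one works on the circle $\RR/\ZZ$.
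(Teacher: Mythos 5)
Your proof is correct and follows essentially the same route as the paper: the first bound comes from Proposition~\ref{prop:4delta2} together with $f(x,y)\ge -xy$ and $\eps_{ij}\le \delta v_i/\gcd(v_i,v_j)$, exactly as in the paper (you merely make the last estimate explicit, which the paper leaves implicit). For the second bound the paper quotes the term $d(0)=2\delta\gcd(v_i,v_j)/v_i$ from the proof of Proposition~\ref{prop:4delta2_coprime} applied to the reduced coprime speeds, and your direct count of the $\gcd(v_i,v_j)$ common centers on the circle is the same contribution viewed before rescaling, so the two arguments coincide.
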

\begin{proof}
We observe that, for  $x,y\leq 1$, we have that $\min(x,y)\geq xy$ and thus $f(x,y)\geq -xy$. Therefore  Proposition~\ref{prop:4delta2} leads to  the following lower bound,
\begin{align*}
\Pr(A_i\cap A_j)&=  4\delta^2 + \frac{2 (\gcd (v_i,v_j))^2 f(\eps_{ij},\eps_{ji})}{v_i v_j}
\geq  4\delta^2 - \frac{2  (\gcd (v_i,v_j))^2\eps_{ij}\eps_{ji}}{v_i v_j}
\geq 2\delta^2\;.
\end{align*}
Moreover, if $v_j<v_i$, from the proof of Proposition~\ref{prop:4delta2_coprime} with
$v_i'=v_i/\gcd (v_i,v_j)$,
\begin{align*}
\Pr(A_i\cap A_j)&\geq d(0)=\frac{2\delta}{v_i'}=\frac{2 \gcd (v_i,v_j)\delta}{v_i}\;.
\end{align*}
\end{proof}

By using~\eqref{eq:first bound}, we can provide a first lower bound on the second moment of $X$,
\begin{align}\label{eq:var_1}
\E(X^2) =\sum_{i\neq j} \Pr(A_i\cap A_j) + \sum_{i=1}^n \Pr(A_i) \geq 2\delta^2 n(n-1)+ 2\delta n
\geq 2\delta n(\delta(n-1)+1)\;.
\end{align}
We devote the rest of this section to improve~\eqref{eq:var_1}.
Let us first show for which values is $f$  nonnegative.
\begin{lemma}\label{lem:f_positive}
 The function $f(x,y)$ is nonnegative in $[0,1/2]^2$ and in $[1/2,1)^2$.
\end{lemma}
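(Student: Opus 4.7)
The plan is to handle the two regions separately by resolving the $\max$ in the definition of $f$ and then exploiting WLOG symmetry to resolve the $\min$.

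On $[0,1/2]^2$ we have $x+y\le 1$, so $\max(x+y-1,0)=0$ and $f(x,y)=\min(x,y)-2xy$. Assuming without loss of generality that $x\le y$, this becomes $x-2xy=x(1-2y)$, which is nonnegative since $y\le 1/2$.

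On $[1/2,1)^2$ we have $x+y\ge 1$, so $\max(x+y-1,0)=x+y-1$ and $f(x,y)=\min(x,y)+x+y-1-2xy$. Again assuming $x\le y$, this becomes
\begin{align*}
x+x+y-1-2xy=(1-y)(2x-1),
\end{align*}
which is nonnegative since $x\ge 1/2$ and $y<1$.

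So the argument is a one-line case split in each region, and there is no real obstacle beyond being careful about the WLOG step (the function is symmetric in $x,y$, so this is free). The slightly cleaner way to present it is to note that in both cases $f$ factors as a product of two factors each of which has a definite sign throughout the region.
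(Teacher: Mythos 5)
Your proof is correct and takes essentially the same elementary route as the paper: on $[0,1/2]^2$ your factorization $x(1-2y)\ge 0$ (with $x\le y$) is just the paper's observation that $\min(x,y)\ge 2xy$ there. The only cosmetic difference is on $[1/2,1)^2$, where the paper reduces to the first square via the reflection identity $f(1-x,1-y)=f(x,y)$, while you compute directly and factor as $(1-y)(2x-1)\ge 0$; both are equally valid one-line arguments, and your use of symmetry in $x,y$ for the WLOG step is legitimate since $f$ is symmetric.
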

\begin{proof}
 If $0\leq x,y\leq 1/2$, then $\min(x,y)\geq 2xy$, which implies $f(x,y)\geq 0$.

Moreover,
\begin{align*}
f(1-x,1-y)& = \min(1-x,1-y)+\max(1-x-y,0)-2(1-x-y+xy)\\
&=  \min(y,x)+\max(0,x+y-1)-2xy \\
&= f(x,y)\;.
\end{align*}

Therefore, we also have $f(x,y)\geq 0$ for all $1/2\leq x,y< 1$.
 \end{proof}

The following lemma shows that the error term of $\Pr(A_i\cap A_j)$ provided in
Proposition~\ref{prop:4delta2}, cannot be too negative if $v_i$ and $v_j$ are either close or
far enough from each other.
\begin{lemma}\label{lem:not_too_small}
Let $M\geq 2$ be an integer, $\gamma= M^{-1}>0$ and $v_j<v_i$. If either $(1-\gamma)v_i\leq v_j$ or
$ \gamma\delta v_i\geq v_j$, then
$$
\frac{(\gcd (v_i,v_j))^2 f\left(\eps_{ij},\eps_{ji}\right)}{v_i v_j} \geq -\gamma\delta^2\;.
$$
\end{lemma}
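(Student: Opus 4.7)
The plan is to reduce first to the coprime case, and then to the ``mixed case'' (where $\eps_{ij}$ and $\eps_{ji}$ lie on opposite sides of $1/2$), after which an explicit formula for $|f|$ and a direct estimate separately handle each hypothesis. Setting $d = \gcd(v_i, v_j)$, $v_i' = v_i/d$, and $v_j' = v_j/d$, the claim becomes $f(\eps_{ij}, \eps_{ji})/(v_i' v_j') \geq -\gamma \delta^2$, where $\eps_{ij} = \{v_i'\delta\}$ and $\eps_{ji} = \{v_j'\delta\}$. By Lemma~\ref{lem:f_positive}, $f \geq 0$ whenever $\eps_{ij}$ and $\eps_{ji}$ lie on the same side of $1/2$, so I restrict to the mixed case.

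In the mixed case, with $p = \min(\eps_{ij}, \eps_{ji})$ and $q = \max(\eps_{ij}, \eps_{ji})$, a direct case split on the sign of $p+q-1$ gives the explicit formula
\[
|f(\eps_{ij},\eps_{ji})| =
\begin{cases}
  p(2q-1), & \text{if } p+q \leq 1,\\
  (1-2p)(1-q), & \text{if } p+q > 1,
\end{cases}
\]
and in particular $|f| \leq \min(p, 1-q)$. Writing $v_i'\delta$ and $v_j'\delta$ in integer-plus-fractional form, I obtain an arithmetic identity relating $p + (1-q)$ to $\{(v_i'-v_j')\delta\}$ (or its complement, depending on which of $\eps_{ij}, \eps_{ji}$ is the larger): this identity translates each hypothesis of the lemma into a concrete bound on $|f|$. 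Finally, the mixed-case requirement that one of $\{v_i'\delta\}, \{v_j'\delta\}$ exceeds $1/2$ forces the corresponding $v_i'$ or $v_j'$ to be strictly greater than $1/(2\delta)$.

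For Case A ($(1-\gamma)v_i \leq v_j$), the estimate $(v_i'-v_j')\delta \leq \gamma v_i'\delta$, combined with the mixed-case lower bound on $v_i'$ and $v_j'$, transforms the factored formula for $|f|$ into $|f|/(v_i'v_j') \leq \gamma\delta^2$; the subcase $(v_i'-v_j')\delta > 1$ is handled separately, since then one already has $v_i' > 1/(\gamma\delta)$ and the universal bound $|f| \leq 1/8$ is sufficient. For Case B ($v_j \leq \gamma\delta v_i$), the hypothesis gives $v_i' \geq v_j'/(\gamma\delta) \geq 1/(\gamma\delta)$. If $v_j'\delta < 1/2$, then $\eps_{ji} = v_j'\delta$ is automatically $p$, so $|f| \leq p = v_j'\delta$ yields $|f|/(v_i'v_j') \leq \delta/v_i' \leq \gamma\delta^2$; otherwise $v_j' \geq 1/(2\delta)$ and $v_i'v_j' \geq 1/(4\gamma\delta^3)$, so $|f| \leq 1/8$ again suffices. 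The main obstacle is managing the constants across these subcases — particularly near the boundaries $p+q = 1$ and $(v_i'-v_j')\delta = 1$ — but no new idea is required beyond the explicit factorization of $|f|$ and the identity linking it to $\{(v_i'-v_j')\delta\}$.
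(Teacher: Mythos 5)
Your argument is correct, and it follows a genuinely different route from the paper's proof. The paper writes $v_i/\gcd(v_i,v_j)=k\delta^{-1}+x$, $v_j/\gcd(v_i,v_j)=l\delta^{-1}+y$ and runs a six‑part case analysis organized around whether $v_i/\gcd(v_i,v_j)$ exceeds $(\gamma\delta)^{-1}$ and whether $k=l$ or $k=l+1$, in each region using partial bounds such as $f\geq \eps_{ji}-2\eps_{ij}\eps_{ji}$ and explicitly maximizing the resulting quadratic $g(x,y)$ along the line $x+y=\delta^{-1}$. You instead split directly according to the two hypotheses (speeds close versus far apart), reduce to the mixed square via Lemma~\ref{lem:f_positive}, and use the exact factorization $|f|=p(2q-1)$ or $(1-2p)(1-q)$ together with the identity that $q-p$, respectively $p+(1-q)$, equals $\{(v_i'-v_j')\delta\}$ when $(v_i'-v_j')\delta<1$, with the universal bound $|f|\leq 1/8$ covering the remaining subcases. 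I checked the details: in the close case, in either orientation and on either side of $p+q=1$, one factor of $|f|$ is at most $(v_i'-v_j')\delta\leq\gamma v_i'\delta$ (using $2q-1\leq q-p$ when $p+q\leq 1$ and $1-2p<q-p$ when $p+q>1$) and the other factor is at most $v_j'\delta$ (via $p\leq v_j'\delta$, $1-q<p$, or $2q-1\leq q\leq v_j'\delta$), so $|f|\leq\gamma\delta^2 v_i'v_j'$ with equality approached exactly at the paper's extremal configuration from Subcase B.1; the far case and the $(v_i'-v_j')\delta\geq 1$ subcase close with ample room, as you say. What your route buys is the elimination of the $k,l$ bookkeeping and of the two‑dimensional optimization over the trapezoidal regions of the paper's figure — everything reduces to pairing two factor bounds; what the paper's route buys is an explicit map of where and by how much $f$ is negative. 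One point to make explicit when you write it up: in the close case with $\eps_{ji}\geq 1/2>\eps_{ij}$, the crude lower bound $v_j'\geq(2\delta)^{-1}$ alone only yields $2\gamma\delta^2$, so you genuinely need the factor pairing (in particular $2q-1\leq q\leq v_j'\delta$ and $1-2p<q$) rather than the blanket estimate $|f|\leq\min(p,1-q)$ to recover the stated constant.
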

\begin{proof}
For the sake of simplicity, let us write $v_i/\gcd (v_i,v_j)=k\delta^{-1}+x$ and
$v_j/\gcd (v_i,v_j)=l\delta^{-1}+y$ with $k$ and $l$ being nonnegative integers and  $0\le x,y<\delta^{-1}$. In
particular, observe that
$\eps_{ij}=x\delta$ and $\eps_{ji}=y\delta$.
Moreover,  we can assume that $v_i$ and $v_j$ are such that $f\left(\eps_{ij},\eps_{ji}\right)$ is
negative, otherwise, there is nothing to prove.

We split the proof in the two different cases each consisting of some other subcases. Figure~\ref{fig:f(x,y)2} illustrates the subcase considered in each situation. Case

\fbox{ \textbf{Case A: ($\frac{v_i}{\gcd(v_i,v_j)}\geq (\gamma\delta)^{-1}$)}:}
  This case covers the case when $\gamma\delta v_i\geq v_j$, since $v_j/\gcd(v_i,v_j)\geq 1$ 
  and also the case when $(1-\gamma) v_i\leq v_j$ and $\frac{v_i}{\gcd(v_i,v_j)}\geq (\gamma\delta)^{-1}$.

\textbf{\underline{Subcase A.1} ($y\leq x$):} We have,
\begin{align*}
\frac{(\gcd (v_i,v_j))^2 f\left(\eps_{ij},\eps_{ji}\right)}{v_i v_j} &\geq \frac{(\gcd(v_i,v_j))^2 (\eps_{ji}-
2\eps_{ij}\eps_{ji})}{v_i v_j}
= \frac{(\gcd(v_i,v_j))^2 (y\delta- 2 xy\delta^{2})}{v_i v_j}
\ge \frac{\gcd (v_i,v_j)(1-2x\delta)}{v_i}\cdot\delta\;,
\end{align*}
where the last inequality holds from the fact that  $f\left(\eps_{ij},\eps_{ji}\right)<0$ and $y\leq
 v_j/\gcd (v_i,v_j)$.

Recall that $v_i/\gcd(v_i,v_j)\geq  (\gamma\delta)^{-1}= M\delta^{-1}$.
Observe also that, since $y\leq x$ and
$f(\eps_{ij},\eps_{ji})$ is negative, by Lemma~\ref{lem:f_positive} we have $\delta^{-1}/2 \leq
x<\delta^{-1}$. Therefore,
\begin{align*}
\frac{(\gcd(v_i,v_j))^2f\left(\eps_{ij},\eps_{ji}\right)}{v_i v_j} &\geq
\frac{\gcd (v_i,v_j)(1-2x\delta)}{v_i}\delta \geq
 \frac{1-2x\delta}{M}\delta^2 > -\gamma \delta^2\;.
\end{align*}

\textbf{\underline{Subcase A.2 ($y>x$):}} Here,
\begin{align*}
\frac{(\gcd(v_i,v_j))^2f\left(\eps_{ij},\eps_{ji}\right)}{v_i v_j} &\geq \frac{(\gcd(v_i,v_j))^2(\eps_{ij}-
2\eps_{ij}\eps_{ji})}{v_i
v_j}= \frac{(\gcd(v_i,v_j))^2 x(1- 2y\delta)}{v_i v_j}\delta
\geq  \frac{\gcd (v_i,v_j)(1-2y\delta)}{v_j}\cdot \frac{\delta}{M}\;,
\end{align*}
where the last inequality holds from the fact that, in this subcase, $Mx\leq M \delta^{-1}\leq v_i/\gcd(v_i,v_j)$.

As before, since $f(\eps_{ij},\eps_{ji})$ is negative, by
Lemma~\ref{lem:f_positive} we have $\delta^{-1}/2 \leq y<\delta^{-1}$ and $v_j/\gcd (v_i,v_j)\geq y$.
Therefore,
\begin{align*}
\frac{(\gcd(v_i,v_j))^2f\left(\eps_{ij},\eps_{ji}\right)}{v_i v_j} &\geq
\frac{\gcd (v_i,v_j)(1-2y\delta)}{v_j}\cdot\frac{\delta}{M}\geq
\frac{1-2y\delta}{y}\cdot \frac{\delta}{M}
> -\gamma \delta^2\;.
\end{align*}

 \fbox{\textbf{Case B ($(1-\gamma) v_i\leq v_j$ and $\frac{v_i}{\gcd(v_i,v_j)}\leq (\gamma\delta)^{-1}$)}:} 
 By Lemma~\ref{lem:f_positive} and since $\frac{v_i}{\gcd(v_i,v_j)}\leq (\gamma\delta)^{-1}$, we
can assume that either $k=l$, $y< \delta^{-1}/2$ and $x\geq \delta^{-1}/2$ (Subcases B.1 and B.2) or $k=l+1$, $y\geq \delta^{-1}/2$ and $x<\delta^{-1}/2$ (Subcases B.3 and B.4). 
In all these cases, $(1-\gamma) v_i\leq v_j$ implies
 \begin{equation}\label{eq:y}
 y\geq (1-\gamma)x-\gamma k\delta^{-1}\;.
 \end{equation}

\textbf{\underline{Subcase B.1} ($k=l$ and $x+y\leq\delta^{-1}$):} Since $x+y\leq \delta^{-1}$, then $\max(0,\eps_{ij}+\eps_{ji}-1)=0$.

By using $v_i/\gcd (v_i,v_j) = k\delta^{-1}+x$, $v_j/\gcd (v_i,v_j) =k\delta^{-1}+y\geq
y$ and the fact that $f\left(\eps_{ij},\eps_{ji}\right)<0$  we have
\begin{align*}
\frac{(\gcd(v_i,v_j))^2f\left(\eps_{ij},\eps_{ji}\right)}{v_i v_j} &= \frac{(\gcd(v_i,v_j))^2(\eps_{ji}-
2\eps_{ij}\eps_{ji})}{v_i v_j}
= \frac{(\gcd(v_i,v_j))^2 y(1-2x\delta)}{v_i v_j}\delta \geq
\frac{1-2x\delta}{k+x\delta}\delta^2\;.
\end{align*}
By combining \eqref{eq:y}  with  $x+y\leq
\delta^{-1}$, we get $x\leq \frac{1+\gamma k}{2-\gamma}\delta^{-1}$. Thus,
\begin{align*}
\frac{(\gcd(v_i,v_j))^2f\left(\eps_{ij},\eps_{ji}\right)}{v_i v_j} &\geq  \frac{1-2x\delta}{k+x\delta}
\delta^2 \geq  \frac{1-\frac{2(1+\gamma k)}{2-\gamma}}{k+\frac{1+\gamma k}{2-\gamma}}\delta^2=
-\gamma \delta^2\;,
\end{align*}
for each $k\geq 0$.

\textbf{\underline{Subcase B.2} ($k=l$ and $x+y\geq \delta^{-1}$):}

Now, $\max(0,\eps_{ij}+\eps_{ji}-1)=(x+y)\delta -1$. Then,
\begin{align*}
\frac{(\gcd(v_i,v_j))^2f\left(\eps_{ij},\eps_{ji}\right)}{v_i v_j} &
= \frac{(\gcd(v_i,v_j))^2(y\delta+ (x+y)\delta -1 -2xy\delta^2)}{v_i v_j}= -\frac{(\gcd(v_i,v_j))^2(1-2y\delta)(1-x\delta)}{v_i v_j}\delta\;.
\end{align*}
It remains to upper bound $g(x,y)=(1-2y\delta)(1-x\delta)=2\delta^2(\delta^{-1}/2-y)(\delta^{-1}-x)$ in the corresponding area. Observe that $\partial_x g(x,y) = -2\delta^2(\delta^{-1}/2-y)\leq 0$ for every $y\leq \delta^{-1}/2$. Thus, the local maximum of $g(x,y)$ is attained in the line $x+y=\delta^{-1}$. Observe that this case is covered by Case $B.1$. Following the steps of the previous case,
\begin{align*}
\frac{(\gcd(v_i,v_j))^2f\left(\eps_{ij},\eps_{ji}\right)}{v_i v_j} &\geq -\gamma \delta^2 \;.
\end{align*}

\textbf{\underline{Subcase B.3} ($k=l+1$ and $x+y\leq\delta^{-1}$):}

Now we have $\max(0,\eps_{ij}+\eps_{ji}-1)=0$ and $\min(\eps_{ij},\eps_{ji})=\eps_{ij}$.
Since $v_i/\gcd (v_i,v_j) = k\delta^{-1}+x\geq k\delta^{-1}$ and $v_j/\gcd (v_i,v_j) =(k-1)\delta^{-1}+y\geq y$, $v_j\geq (1-\gamma)v_i$ implies that $y\geq (1-\gamma)x-(\gamma k-1)\delta^{-1}$.

Then,
\begin{align*}
\frac{(\gcd (v_i,v_j))^2f\left(\eps_{ij},\eps_{ji}\right)}{v_i v_j} &= \frac{(\gcd (v_i,v_j))^2(\eps_{ij}-
2\eps_{ij}\eps_{ji})}{v_i v_j} = -\frac{(\gcd (v_i,v_j))^2 x\delta(2y\delta-1)}{v_i v_j}
\end{align*}

We aim to upper bound $g(x,y)=\delta x(2y\delta-1)=2\delta^2 x(y-\delta^{-1}/2)$ in the corresponding area. We have that $\partial_y g(x,y) = 2\delta^2x\geq 0$ for every $x\leq \delta^{-1}/2$. Again, the local maximum of $g(x,y)$ is attained in the line $x+y=\delta^{-1}$. A simple computation gives that $g(x,\delta^{-1}-x)$ attains its maximum in $x_0=\delta^{-1}/4$.

\textit{\underline{Subcase B.3.1} ($k\leq \gamma^{-1}/2$):}
In this case, the pair $(x_0,\delta^{-1}-x_0)$ does not lie in the area. Thus, the maximum is attained, when $x_0$ is minimized, that is in the point where $x+y\leq \delta^{-1}$ and $y= (1-\gamma)x-(\gamma k-1)\delta^{-1}$ meet. That is
$$
g(x,y)\leq g\left( \frac{\gamma k}{2-\gamma}\delta^{-1} , \frac{2-(k+1)\gamma}{2-\gamma}\delta^{-1} \right)=\frac{2-(2k+1)\gamma}{(2-\gamma)^2}\gamma k\;.
$$

Since $v_i/\gcd (v_i,v_j) \geq k\delta^{-1}$ and  $v_j/\gcd (v_i,v_j) \geq (k-1/2)\delta^{-1}$,
\begin{align*}
\frac{(\gcd (v_i,v_j))^2f\left(\eps_{ij},\eps_{ji}\right)}{v_i v_j} &\geq -\frac{2-(2k+1)\gamma}{(2-\gamma)^2(k-1/2)}\gamma \delta^2\\
 &\geq -\frac{2(2-3\gamma)}{(2-\gamma)^2} \gamma \delta^2 \geq \gamma \delta^2\;.
\end{align*}
where we used that $k\geq 1$.

\textit{\underline{Subcase B.3.2} ($k\geq \gamma^{-1}/2$):}
Using the global maximum of $g(x,\delta^{-1}-x)$ in $x_0=\delta^{-1}/4$, we have that for any $(x,y)$ in the area
$$
g(x,y)\leq g(\delta^{-1}/4,3\delta^{-1}/4)=1/8\;.
$$

Now $v_i/\gcd (v_i,v_j) \geq k\delta^{-1}\geq (\gamma\delta)^{-1}/2$ and  $v_j/\gcd (v_i,v_j) = (k-1)\delta^{-1}+y \geq  \delta^{-1}/2$,
\begin{align*}
\frac{(\gcd (v_i,v_j))^2f\left(\eps_{ij},\eps_{ji}\right)}{v_i v_j} &\geq -4\gamma\delta^2 \cdot\frac{1}{8}\geq - \gamma \delta^2\;.
\end{align*}

\textbf{\underline{Subcase B.4} ($k=l+1$ and $x+y\geq\delta^{-1}$):}
We have  $\max(0,\eps_{ij}+\eps_{ji}-1)=(x+y)\delta -1$ and $\min(\eps_{ij},\eps_{ji})=x$. Then,
\begin{align*}
\frac{(\gcd (v_i,v_j))^2f\left(\eps_{ij},\eps_{ji}\right)}{v_i v_j} &
=- \frac{(\gcd (v_i,v_j))^2(1-2x\delta)(1-y\delta)}{v_i v_j}\;.
\end{align*}

Now $g(x,y)=\delta (1-2x\delta)(1-y\delta)=2\delta^2(\delta^{-1}/2-x)(\delta^{-1}-y)$ and $\partial_x g(x,y) = -4\delta^2 (1-y\delta)\leq 0$ for every $\delta^{-1}/2\leq y\leq \delta^{-1}$. As usual, the local maximum of $g(x,y)$ is attained in the line $x+y=\delta^{-1}$. Since this case is already covered by Case $B.3$, we have
\begin{align*}
\frac{(\gcd(v_i,v_j))^2f\left(\eps_{ij},\eps_{ji}\right)}{v_i v_j} &\geq -\gamma \delta^2 \;.
\end{align*}

\end{proof}

\begin{figure}[ht]
 \begin{center}
 \includegraphics[width=0.8\textwidth]{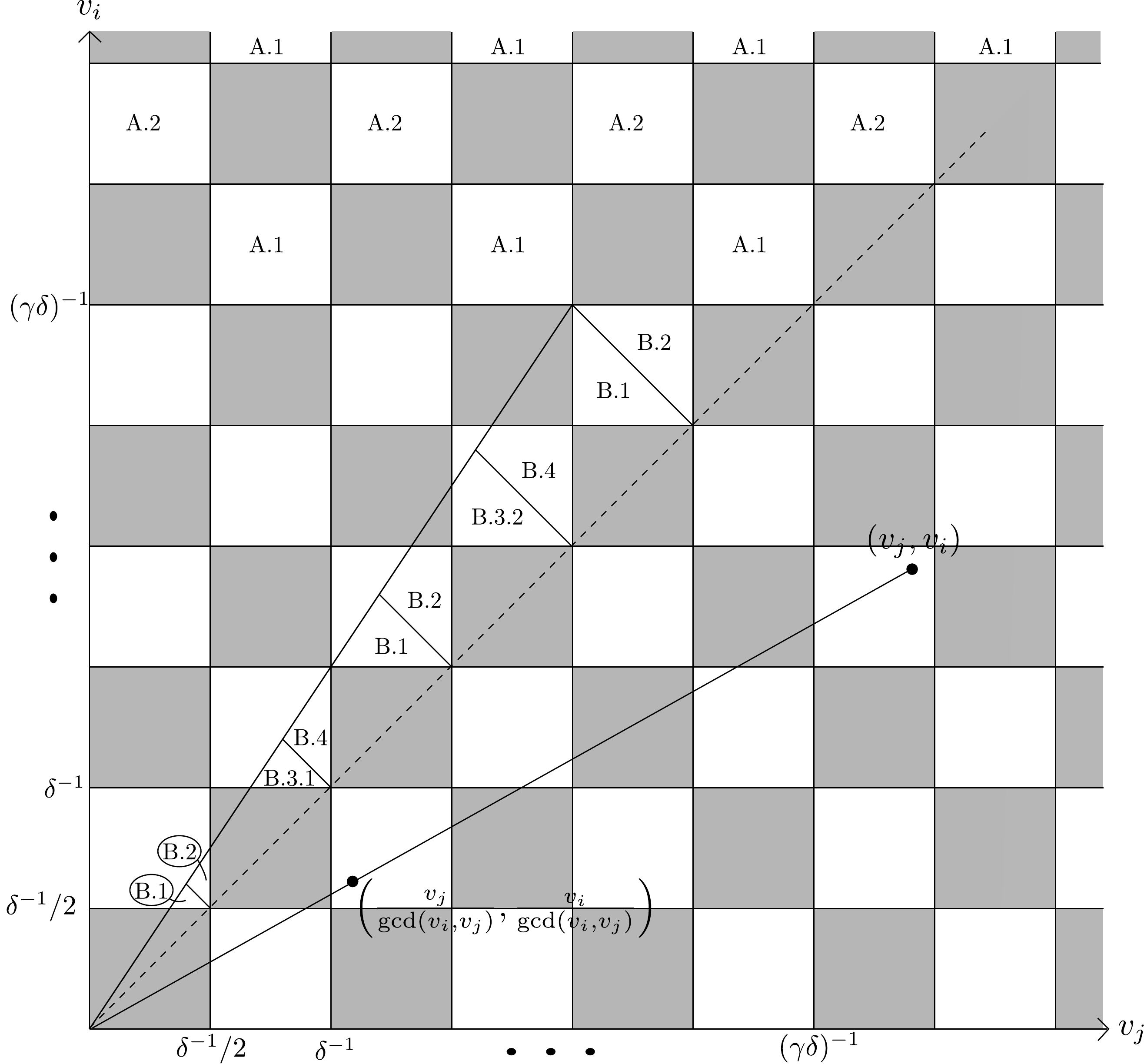}
 \end{center}
 \caption{Different cases in the proof of Lemma~\ref{lem:not_too_small}. Grey areas correspond to
positive values of $f\left(\eps_{ij},\eps_{ji}\right)$ according to Lemma~\ref{lem:f_positive}.}
 \label{fig:f(x,y)2}
\end{figure}

The following lemma shows that among a large set of positive numbers, there should be a pair of numbers
satisfying that they are either close or far enough from each other.
\begin{lemma}\label{lem:aux}
 For every $c>1$, $T>c$ and every set $x_1\geq \dots \geq x_{m+1}>0$ of
nonnegative numbers, with $m\ge \log_c{T}$, there is a pair  $i,j\in [m+1]$, $i<j$, such that
$$
\mbox{ either }\;\;  \frac{x_i}{x_j}\leq c \;\; \mbox{ or } \;\; \frac{x_i}{x_j}\geq T\;.
$$
\end{lemma}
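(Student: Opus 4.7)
The plan is a clean contrapositive argument based on telescoping consecutive ratios. Suppose, for contradiction, that the conclusion fails, i.e.\ for every pair $1\le i<j\le m+1$ neither $x_i/x_j\le c$ nor $x_i/x_j\ge T$ holds. Since the sequence is nonincreasing and positive, this means
\[
c < \frac{x_i}{x_j} < T \quad \text{for all } 1\le i<j\le m+1.
\]
In particular, every consecutive ratio $r_i := x_i/x_{i+1}$, for $1\le i\le m$, satisfies $r_i>c$.

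Next I would telescope: multiplying these bounds yields
\[
\frac{x_1}{x_{m+1}} \;=\; \prod_{i=1}^{m} r_i \;>\; c^{m}.
\]
Using the hypothesis $m\ge \log_c T$ (with $c>1$, so the logarithm is monotone), I obtain $c^m\ge T$, and hence $x_1/x_{m+1} > T$. This contradicts the assumption applied to the extreme pair $(1,m+1)$, namely $x_1/x_{m+1}<T$.

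I do not anticipate any real obstacle; the only point to watch is that the strict/weak inequalities align, which they do, since the strict bound $x_1/x_{m+1}>c^m$ combined with the weak bound $c^m\ge T$ produces the strict contradiction needed against $x_1/x_{m+1}<T$. The assumption $T>c$ is used implicitly to ensure $\log_c T>0$, so that the condition $m\ge \log_c T$ is a genuine lower bound on $m$.
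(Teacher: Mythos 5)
Your proof is correct and is essentially the paper's own argument: the paper also assumes $x_i/x_j>c$ for all pairs, telescopes the consecutive ratios to get $x_1/x_{m+1}>c^m\geq T$, and concludes that the extreme pair satisfies the second alternative (you merely phrase it as a contradiction rather than directly exhibiting the pair $(1,m+1)$). No gaps; the handling of strict versus weak inequalities is fine.
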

\begin{proof}
 Suppose that for each pair  $i<j$ we have $x_i> c x_j$. In particular, for each $i\leq m$, we have
$x_i>c x_{i+1}$ and $x_1> c^m x_{m+1}\geq  T x_{m+1}$. Hence the second possibility
holds for $i=1$ and $j=m+1$.
\end{proof}

For any fixed $\eps>0$, we call a pair $i,j\in [n]$ \emph{$\eps$--good} if $\Pr(A_i\cap A_j)\geq (1-\eps)
4\delta^2 $.
Now we are able to improve the lower bound on the second moment of $X$ given in~\eqref{eq:var_1}.

\begin{proofof}[Proof of Proposition~\ref{prop:variance}]
	Recall that by~\eqref{eq:first bound}, for any pair $i,j\in [n]$, we have $\Pr(A_i\cap
A_j)\geq 2\delta^2$.
	We will show that at least a $\Omega\left(\frac{1}{\log \delta^{-1}} \right)$ fraction of
the pairs are $\eps$--good.

Lemma~\ref{lem:not_too_small} with $M=\gamma^{-1}=\lceil (2\eps)^{-1}\rceil$ implies that
every pair $v_j<v_i$ with either $v_i/v_j\leq (1-\gamma)^{-1}$ or $v_i/v_j\geq (\gamma \delta)^{-1}$
is a $(\gamma/2)$--good pair, and thus, also an $\eps$--good pair.

Consider the graph $H$ on the vertex set $V(H)=[n]$, where $ij$ is an edge if and only if $ij$ is
$\eps$--good. Using Lemma~\ref{lem:aux}, with $c=(1-\gamma)^{-1}$ and $T= (\gamma \delta)^{-1}$ we
know that there are no independent sets of size larger than
$m=\lceil\log_c(T)\rceil=\lceil\frac{\log{\delta^{-1}}}{\log c}+\log_c{\gamma^{-1}}\rceil$. 
Since $\delta\to 0$ when $n\to +\infty$, if $n$ is large enough, there exists some constant $c'_\eps$ that depends only on $\eps$ such that $m \geq \frac{\log{\delta^{-1}}}{c'_\eps}+1$.
Thus, the complement of $H$, $\overline{H}$, has no clique of size $m$.
By the Erd\H os--Stone theorem (see~\cite{es1946}), $|E(\overline{H})|\leq
(1+o(1))\frac{m-2}{m-1}\frac{n^2}{2}$, which implies that there are
$$
|E(H)|\geq (1+o(1))\frac{n^2}{2(m-1)}=(1+o(1))\frac{c'_\eps n^2}{2\log{\delta^{-1}}}\;,
$$
$\eps$--good unordered pairs.

Now, we are able to give a lower bound on the second moment,
\begin{align*}
\E(X^2) &= \sum_{ij\; \eps\text{--good}} \Pr(A_i\cap A_j)+\sum_{ij\;\text{non } \eps\text{--good}}
\Pr(A_i\cap A_j) + \sum_{i=1}^n \Pr(A_i) \\
&\geq (1-\eps)4\delta^2\frac{c'_\eps n^2}{\log{\delta^{-1}}}+ 2\delta^2
\left(n(n-1)-\frac{c'_\eps n^2}{\log{\delta^{-1}}}\right)+ 2\delta n\\
&=  (1-\eps)4\delta^2\frac{c'_\eps n^2}{\log{\delta^{-1}}}+ 2\delta^2
\left(1-\frac{c'_\eps}{\log{\delta^{-1}}}-\frac{1}{n}\right)n^2+ 2\delta n\\
&\geq 2\delta n\left(\delta\left(1+\frac{c_\eps}{\log{\delta^{-1}}}\right)n+ 1\right)\;,
\end{align*}
for some $c_\eps$ that depends only on $\eps$.
	
\end{proofof}

Next, we show some applications of our bounds that extend some known results.

\subsection{Improving the gap of loneliness}\label{subsec:2}

In this subsection we show how to use the result of Proposition~\ref{prop:4delta2} on the pairwise join probabilities to prove Theorem~\ref{thm:2}. To this end we will use the following Bonferroni--type inequality due to Hunter \cite{hunter1976} (see also Galambos and Simonelli  \cite{gs1996}) that slightly improves the union bound in the case where the events are not pairwise disjoint.

\begin{lemma}[Hunter \cite{hunter1976}]\label{lem:bonf_local}
For any tree $T$ with vertex set $V(T)=[n]$, we have
\begin{align}\label{eq:Bonferroni_book2}
 \Pr\left(\bigcap_{i=1}^n  \overline{A_i}\right)&\geq 1- \sum_{i=1}^n \Pr(A_i) + \sum_{ij\in E(T)}
\Pr(A_i\cap A_j)\;.
\end{align}
\end{lemma}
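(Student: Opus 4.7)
The plan is to prove Hunter's inequality by induction on $n$, exploiting the tree structure by peeling off leaves one at a time. Rewriting complements, the claim is equivalent to showing
$$
\Pr\left(\bigcup_{i=1}^n A_i\right) \leq \sum_{i=1}^n \Pr(A_i) - \sum_{ij \in E(T)} \Pr(A_i \cap A_j).
$$
The base case $n = 1$ is trivial: a single vertex gives a tree with no edges, and the inequality collapses to $\Pr(A_1) \leq \Pr(A_1)$.

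For the inductive step, I would pick a leaf $\ell$ of $T$ and let $p$ be its unique neighbor in $T$. Then $T' = T - \ell$ is a tree on $[n] \setminus \{\ell\}$, to which the induction hypothesis applies. The point is to relate $\Pr(\bigcup_{i=1}^n A_i)$ to $\Pr(\bigcup_{i \neq \ell} A_i)$ by the standard identity
$$
\Pr\left(\bigcup_{i=1}^n A_i\right) = \Pr\left(\bigcup_{i \neq \ell} A_i\right) + \Pr(A_\ell) - \Pr\left(A_\ell \cap \bigcup_{i \neq \ell} A_i\right),
$$
and then replace the last term by the smaller quantity $\Pr(A_\ell \cap A_p)$, which is legitimate because $A_p$ sits inside the union $\bigcup_{i \neq \ell} A_i$. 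This is the one monotonicity step that drives the whole argument.

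Combining this bound with the induction hypothesis for $T'$ gives
$$
\Pr\left(\bigcup_{i=1}^n A_i\right) \leq \sum_{i \neq \ell} \Pr(A_i) - \sum_{ij \in E(T')} \Pr(A_i \cap A_j) + \Pr(A_\ell) - \Pr(A_\ell \cap A_p),
$$
and since $E(T) = E(T') \cup \{\ell p\}$ by choice of $\ell$, the right-hand side is exactly $\sum_{i=1}^n \Pr(A_i) - \sum_{ij \in E(T)} \Pr(A_i \cap A_j)$, closing the induction. The main (and really only) conceptual step is the leaf-peeling: the fact that each leaf $\ell$ has a unique neighbor $p$ in $T$ is what lets us bound the correction term $\Pr(A_\ell \cap \bigcup_{i \neq \ell} A_i)$ from below by a single pairwise intersection $\Pr(A_\ell \cap A_p)$ that is indexed by an edge of $T$. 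This is precisely why the inequality holds for trees but would fail if one tried to sum $\Pr(A_i \cap A_j)$ over an arbitrary edge set containing a cycle.
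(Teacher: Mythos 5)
Your proof is correct. Note, though, that the paper itself does not prove this lemma at all: it is quoted as a known result of Hunter \cite{hunter1976} (see also Galambos and Simonelli), so there is no internal argument to compare against. What you have written is a complete, self-contained proof, and it is essentially the standard one for Hunter's inequality: pass to the union, induct on $n$ by removing a leaf $\ell$ with unique neighbour $p$, use $\Pr(A_\ell\cup B)=\Pr(A_\ell)+\Pr(B)-\Pr(A_\ell\cap B)$ with $B=\bigcup_{i\neq\ell}A_i$, and bound $\Pr\bigl(A_\ell\cap\bigcup_{i\neq\ell}A_i\bigr)\geq\Pr(A_\ell\cap A_p)$ by monotonicity, which is exactly where the tree structure (and only the tree structure) is used; your closing remark about why a cyclic edge set would break the argument is also apt. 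The only pedantic point worth making explicit is that the inductive step needs $n\geq 2$ so that a leaf and its neighbour exist and are distinct, which your base case $n=1$ already covers. So your contribution is an elementary proof of a lemma the paper imports as a black box, which makes the presentation more self-contained at negligible cost.
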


As we have already mentioned, $\Pr(A_i)=2\delta$. Thus, it remains to select a tree $T$ that
maximizes $\sum_{ij\in E(T)} \Pr(A_i\cap A_j)$.

\begin{lemma}\label{prop:tree}
 For each $\eps'>0$, there exists a tree $T$ on the set of vertices $[n]$ such
that
$$
\sum_{ij\in E(T)} \Pr(A_i\cap A_j) \geq (1-\eps') 4\delta^2 n\;.
$$
\end{lemma}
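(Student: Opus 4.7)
The plan is to build a spanning tree $T$ rooted at the slowest runner in which almost every edge satisfies one of the two hypotheses of Lemma~\ref{lem:not_too_small}, and then combine the resulting estimate $\Pr(A_i\cap A_j)\geq (1-\gamma/2)\,4\delta^2$ with the default bound $\Pr(A_i\cap A_j)\geq 2\delta^2$ from~\eqref{eq:first bound} for the few exceptional edges.

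Relabel the speeds so that $v_1<v_2<\cdots<v_n$ and fix a small parameter $\gamma=\gamma(\eps')>0$. For each $i\geq 2$, I define the parent of $i$ in $T$ to be the root $1$ when $v_i\geq (\gamma\delta)^{-1}v_1$, and to be $i-1$ otherwise; this gives a spanning tree. Each edge of the form $(1,i)$ of $T$ then satisfies the ``far'' hypothesis $\gamma\delta v_i\geq v_1$ of Lemma~\ref{lem:not_too_small}, and each edge $(i-1,i)$ with $v_i/v_{i-1}\leq (1-\gamma)^{-1}$ satisfies its ``close'' hypothesis; call these the good edges. The only bad edges of $T$ are therefore those of the form $(i-1,i)$ with $v_i<(\gamma\delta)^{-1}v_1$ and $v_i/v_{i-1}>(1-\gamma)^{-1}$.

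The step I expect to be the main obstacle is bounding the number $|B|$ of bad edges. Every bad index $i$ lies in the prefix $v_1<\cdots<v_K$ of speeds strictly less than $(\gamma\delta)^{-1}v_1$, and on this prefix $T$ coincides with the path $1-2-\cdots-K$. Telescoping the ratios along this path gives $\prod_{j=2}^{K}(v_j/v_{j-1})=v_K/v_1<(\gamma\delta)^{-1}$, while each bad edge contributes a factor strictly greater than $(1-\gamma)^{-1}$ and the remaining factors are at least $1$. Hence $(1-\gamma)^{-|B|}<(\gamma\delta)^{-1}$, which yields $|B|=O(\log\delta^{-1}/\gamma)$.

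To conclude, I separate the contributions of good and bad edges and obtain
$$\sum_{ij\in E(T)}\Pr(A_i\cap A_j) \;\geq\; (n-1-|B|)(1-\gamma/2)\,4\delta^2 \;+\; |B|\cdot 2\delta^2 \;=\; 4\delta^2(n-1)(1-\gamma/2)-2\delta^2|B|(1-\gamma).$$
Choosing $\gamma$ a sufficiently small constant multiple of $\eps'$ makes the loss coming from the factor $(1-\gamma/2)$ at most $(\eps'/2)\,4\delta^2 n$, while the $O(\delta^2\log\delta^{-1}/\gamma)$ loss from the bad edges is absorbed by the remaining $(\eps'/2)\,4\delta^2 n$ for $n$ large enough, since $\delta\to 0$ ensures $\log\delta^{-1}=O(\log n)$. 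This gives $\sum_{ij\in E(T)}\Pr(A_i\cap A_j)\geq (1-\eps')\,4\delta^2 n$, as desired.
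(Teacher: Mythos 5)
Your proof is correct and rests on the same two pillars as the paper's (the correlation bound of Lemma~\ref{lem:not_too_small} for ``good'' pairs and the universal bound~\eqref{eq:first bound} for the rest), but the tree is built differently. The paper runs a greedy procedure: as long as at least $\log_c T$ speeds remain, Lemma~\ref{lem:aux} supplies a good pair, one endpoint is discarded, and the resulting $1$-degenerate edge set is a forest with $n-O(\log\delta^{-1})$ good edges, which is then completed to a spanning tree. You instead write down the tree explicitly -- the increasing path on the speeds below $(\gamma\delta)^{-1}v_1$, with every faster runner attached directly to the slowest one -- and bound the number of bad path edges by the telescoping inequality $(1-\gamma)^{-|B|}\le v_K/v_1<(\gamma\delta)^{-1}$, i.e. $|B|=O_{\gamma}(\log\delta^{-1})$. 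This telescoping is really the same pigeonhole that proves Lemma~\ref{lem:aux}, so the quantitative content is identical (all but $O(\log\delta^{-1})$ edges contribute $(4-2\gamma)\delta^2$); what your version buys is a concrete tree and no extension-to-spanning-tree step, at the cost of redoing the counting by hand. Two small points: to invoke Lemma~\ref{lem:not_too_small} you should take $\gamma=1/M$ for an integer $M\ge 2$ chosen so that $\gamma\le\eps'$, which is harmless; and your closing justification ``$\delta\to 0$ ensures $\log\delta^{-1}=O(\log n)$'' is not literally valid (think $\delta=e^{-n^2}$). What your argument actually needs is $|B|=O(\log\delta^{-1})=o(n)$, which is precisely the same implicit assumption hidden in the paper's step $n-O(\log\delta^{-1})=(1-o(1))n$, and it holds in the regime where the lemma is applied (Theorem~\ref{thm:2} uses $\delta=\Theta(1/n)$), so this is a wording issue rather than a gap.
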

\begin{proof}
	Recall that Proposition~\ref{prop:4delta2} states that
	$$
	\Pr(A_i\cap A_j) = 4\delta^2 + \frac{2(v_i,v_j)^2f(\eps_{ij},\eps_{ji})}{v_iv_j}.
	$$
	Set $M$ to be the largest integer satisfying $M=\gamma^{-1}< \lceil (2\eps')^{-1} \rceil$.
	We will construct a large forest $F$ on the set of vertices $[n]$, where all the edges
$ij\in E(F)$ are $\eps'$--good. In particular they will satisfy,
	$$
	\Pr(A_i\cap A_j)\geq  (4-2\gamma)\delta^2 \geq (1-\eps')4\delta^2\;.
	$$

Let us show how to select the edges of the forest by a procedure. Set $S_0=[n]$ and
$E_0=\emptyset$. In the $k$-th step, we select different $i,j\in S_{k-1}$ such that either $v_i/v_j
\leq (1-\gamma)^{-1}$ or $v_i/v_j \geq (\gamma\delta)^{-1}$, and set $E_k=E_{k-1}\cup \{ij\}$,
$S_{k}=S_{k-1}\setminus \{i\}$. If no such pair exists, we stop the procedure.

Let $\tau$ be the number of steps that the procedure runs before being halted. By
Lemma~\ref{lem:aux} with $c=(1-\gamma)^{-1}$ and $T=(\gamma\delta)^{-1}$ we can always find such an
edge $ij$, provided that the set $S_k$ has size at least $\log_c{(\gamma\delta^{-1})}$. Thus
$\tau\geq n-\log_{c}{T}$. Since the size of the sets $E_k$ increases exactly by
one at each step, we have $|E_\tau |\geq n-\log_c{T}= n-O(\log{\delta^{-1}})= (1-o(1))n$. Besides, by construction
$E_\tau$ is an acyclic set of edges: since we delete one of the endpoints of each selected edge from the set
$S_k$, $E_{\tau}$ induces a $1$-degenerate graph or equivalently, a forest.

By Lemma~\ref{lem:not_too_small}, for each edge $ij$ in $E_\tau$ we have
$$
\Pr(A_i\cap A_j)\geq (4-2\gamma)\delta^2\;.
$$
Therefore we can construct a spanning tree $T$ on the vertex set $[n]$, that contains the forest
$F$ and thus satisfies
$$
 \sum_{ij\in E(T)} \Pr(A_i\cap A_j) \geq \left(1-o(1)\right)(4-2\gamma) \delta^2 n \geq
(1-\eps') 4\delta^2 n\;,
$$
if $n$ is large enough.

\end{proof}

Let us proceed to prove Theorem~\ref{thm:2}.
\begin{proofof}[Proof of Theorem~\ref{thm:2}]
Given an $\eps>0$, by Lemma~\ref{eq:Bonferroni_book2} and Lemma~\ref{prop:tree} with $\eps'=\eps/2$, we have
\begin{align*}
\Pr\left(\bigcap_{i=1}^n \overline{A_i}\right)&\geq 1- \sum_{i=1}^n \Pr(A_i) +\sum_{ij\in E(T)}
\Pr(A_i\cap A_j)\nonumber\\
 &\geq 1-2\delta n(1-  2 (1-\eps')\delta)\;.
\end{align*}
The expression above is strictly positive for
$$
\delta \le  \frac{1}{2n-2+2\eps'} =\frac{1}{2n-2+\eps}\;,
$$
and the theorem follows.
\end{proofof}

Theorem \ref{thm:div} follows from the following Corollary.

\begin{corollary}\label{cor:divergence}
 For every sufficiently large $n$  and every set of nonzero speeds $v_1,\dots
,v_{n}$, such that there exists a tree $T$,
\begin{align}\label{eq:condition}
\sum_{ij\in E(T)} \frac{\gcd (v_i,v_j)}{\max(v_i,v_j)}=c\;,
\end{align}
then there exists a time  $t\in \TT$ such that
$$
 \| tv_i\|\geq\frac{1}{2(n-c)}\;,
$$
for every $i\in [n]$.
In particular, if $\sum_{i=2}^{n} \frac{1}{v_i}=c$ the same conclusion follows.
\end{corollary}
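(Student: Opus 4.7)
The plan is to feed the tree $T$ supplied by the hypothesis directly into Hunter's Bonferroni-type inequality (Lemma~\ref{lem:bonf_local}), and to lower-bound the pairwise intersection probabilities along its edges using the second inequality of Corollary~\ref{cor:simple}, namely $\Pr(A_i\cap A_j)\geq 2\delta\gcd(v_i,v_j)/\max(v_i,v_j)$. Everything then collapses to an arithmetic identity involving the quantity $c$ defined in~\eqref{eq:condition}.

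Concretely, fix any $\delta<\frac{1}{2(n-c)}$. Since $\Pr(A_i)=2\delta$ for every $i$, Lemma~\ref{lem:bonf_local} applied to $T$, combined with the second bound of Corollary~\ref{cor:simple} on each edge, gives
$$
\Pr\!\left(\bigcap_{i=1}^n\overline{A_i}\right)\;\geq\; 1-2\delta n+2\delta\sum_{ij\in E(T)}\frac{\gcd(v_i,v_j)}{\max(v_i,v_j)}\;=\;1-2\delta(n-c)\;>\;0.
$$
Hence for each such $\delta$ there is $t_\delta\in(0,1)$ with $\|t_\delta v_i\|\geq\delta$ for every $i\in[n]$. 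Letting $\delta\nearrow \frac{1}{2(n-c)}$ and picking a limit point $t^\star$ of $(t_\delta)$ in the compact interval $[0,1]$, continuity of $\|\cdot\|$ yields $\|t^\star v_i\|\geq\frac{1}{2(n-c)}$ for every $i$, which is the desired bound.

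For the ``in particular'' statement, after relabelling assume $v_1=\min_i v_i$ and take $T$ to be the star centred at vertex $1$, with edges $\{1,i\}$ for $i=2,\dots,n$. Since $v_1\leq v_i$ we have $\max(v_1,v_i)=v_i$ and $\gcd(v_1,v_i)\geq 1$, so
$$
\sum_{ij\in E(T)}\frac{\gcd(v_i,v_j)}{\max(v_i,v_j)}\;=\;\sum_{i=2}^{n}\frac{\gcd(v_1,v_i)}{v_i}\;\geq\;\sum_{i=2}^{n}\frac{1}{v_i}\;=\;c,
$$
so the first part of the corollary applies with this value of $c$. The proof is essentially a two-line combination of the two named results, so no step is a genuine obstacle; the only point that needs attention is the standard compactness/continuity argument that turns the strict inequality valid for every $\delta<\frac{1}{2(n-c)}$ into the weak inequality $\|tv_i\|\geq\frac{1}{2(n-c)}$ claimed in the statement.
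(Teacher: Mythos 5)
Your proof is correct and follows essentially the same route as the paper: Hunter's inequality applied to the given tree $T$, with each edge bounded via $\Pr(A_i\cap A_j)\geq 2\delta\gcd(v_i,v_j)/\max(v_i,v_j)$, and the star centred at the slowest runner for the ``in particular'' clause. The only difference is your explicit compactness/limit argument to upgrade the strict inequality for $\delta<\frac{1}{2(n-c)}$ to the closed bound, a detail the paper leaves implicit.
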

\begin{proof} By using inequality~\eqref{eq:second bound} we have
$$
\sum_{ij\in E(T)} \Pr(A_i\cap A_j) \geq  2\delta \sum_{ij\in E(T)}
\frac{\gcd (v_i,v_j)}{\max(v_i,v_j)}=2c\delta
$$
Using Lemma~\ref{eq:Bonferroni_book2} in a similar way as in the proof of Theorem~\ref{thm:2}, we
obtain that $\Pr\left(\bigcap_{i=1}^n \overline{A_i}\right)>0$ for
every $\delta < \frac{1}{2(n-c)}$.

The last part of the corollary follows by considering $T$ to be the star with center in the
smallest of the speeds.
\end{proof}

\section{Weaker conjectures and interval graphs}\label{sec:weakest}
In this section we give a proof for Theorem~\ref{thm:my_invisible}.
The following weaker conjecture has been proposed by Spencer\footnote{Transmitted to the authors by
Jarek Grytczuk.}.
\begin{conjecture}[Weak Lonely Runner Conjecture]\label{conj:weak}
For every $n\geq 1$ and every set of different speeds $v_1,\dots ,v_{n}$, there exist a time
$t\in\RR$ and a runner $j\in [n]$, such that
$$
 \| t(v_i-v_j)\|\geq \frac{1}{n}
$$
for every $i\neq j$.
\end{conjecture}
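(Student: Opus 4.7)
My plan is to work with the dynamic circular interval graph model sketched in the introduction to this section. Associate to each runner $i$ the arc $A_i(t)\subset \RR/\ZZ$ of length $1/(n+1)$ centered at $\{tv_i\}$, and let $G_t$ be the overlap graph on $[n+1]$ whose edge set consists of the pairs $\{i,j\}$ with $\|t(v_i-v_j)\|<1/(n+1)$. Under this dictionary, runner $i$ is alone at $t$ iff $i$ is isolated in $G_t$, and almost alone iff $\deg_{G_t}(i)\le 1$. The theorem is thus equivalent to producing a $t\in(0,1)$ such that $G_t$ has either an isolated vertex or at least four vertices of degree at most $1$.

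The engine of the proof is a simple averaging. Since $v_i-v_j$ is a nonzero integer, $\Pr(\|t(v_i-v_j)\|<1/(n+1))=2/(n+1)$, and so
\[
\int_0^1 |E(G_t)|\,dt \;=\; \binom{n+1}{2}\cdot\frac{2}{n+1} \;=\; n.
\]
Assume, for a contradiction, that at every $t\in(0,1)$ no runner is alone and at most three are almost alone. Writing $n_k(t)$ for the number of vertices of $G_t$ of degree $k$, this reads $n_0(t)=0$ and $n_1(t)\le 3$, from which
\[
2|E(G_t)|=\sum_i \deg_{G_t}(i)\;\ge\; n_1(t)+2(n+1-n_1(t))\;=\;2n+2-n_1(t)\;\ge\;2n-1.
\]
Since the degree sum is even this improves to $|E(G_t)|\ge n$ for every $t\in(0,1)$.

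To finish, I will exploit the $G_t=K_{n+1}$ behaviour at the endpoints of $(0,1)$. Assuming positive integer speeds (the standard reduction used throughout the paper), set $\Delta=\max_{i\neq j}|v_i-v_j|$ and $t^*=1/((n+1)\Delta)$. For $t\in(0,t^*)$ every pair satisfies $|t(v_i-v_j)|<1/(n+1)<1/2$, so $\|t(v_i-v_j)\|=|t(v_i-v_j)|<1/(n+1)$, and all pairs of arcs overlap, giving $|E(G_t)|=\binom{n+1}{2}$; by periodicity the same holds on $(1-t^*,1)$. Splitting the integral,
\[
n=\int_0^1 |E(G_t)|\,dt\;\ge\;2t^*\binom{n+1}{2}+(1-2t^*)\,n\;=\;n+\frac{n(n-1)}{(n+1)\Delta},
\]
which is strictly bigger than $n$ as soon as $n\ge 2$, a contradiction. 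The $n=1$ case is handled directly: taking $t=1/(2|v_1-v_2|)$ puts the two runners at mutual distance $1/2\ge 1/(n+1)$, so each is alone.

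The step I expect to be most delicate is the uniform edge bound $|E(G_t)|\ge n$. The assumption $n_0=0$ and $n_1\le 3$ only gives the degree-sum bound $2n-1$, and it is crucial that the parity of $\sum_i\deg_{G_t}(i)$ closes the remaining gap of $1$; without this parity trick one would not beat the averaged value $n$ pointwise. Once the uniform bound is established, matching it against the averaging identity and the $K_{n+1}$ behaviour near the boundary of $(0,1)$ is essentially automatic; the only loose ends are verifying that the reduction to integer speeds is valid for the almost-alone condition and checking that the boundary windows $(0,t^*)$ and $(1-t^*,1)$ are disjoint and of positive length, both of which follow from $n\ge 2$ and $\Delta\ge 1$.
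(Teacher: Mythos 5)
There is a genuine gap, and it sits in your first paragraph. The statement you were asked to prove is Conjecture~\ref{conj:weak}, which demands a time $t$ and a runner $j$ with $\|t(v_i-v_j)\|\ge 1/n$ for \emph{every} $i\neq j$, i.e.\ an isolated vertex of the overlap graph; this is an open conjecture (attributed to Spencer) which the paper does not prove. Your claim that the statement ``is thus equivalent to producing a $t\in(0,1)$ such that $G_t$ has either an isolated vertex or at least four vertices of degree at most $1$'' is false in the direction you need: four vertices of degree one give four \emph{almost alone} runners, not a lonely one, so the disjunction you prove is strictly weaker than the conjecture (there is also a harmless relabelling issue, $n+1$ runners with gap $1/(n+1)$ instead of $n$ with $1/n$). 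What your argument actually establishes is Theorem~\ref{thm:my_invisible}, and it does so by essentially the paper's own proof of Proposition~\ref{prop:invisibles}: the first--moment identity $\int_0^1|E(G_t)|\,dt=\binom{n+1}{2}\cdot\frac{2}{n+1}=n$, the fact that the edge count exceeds this average on a set of positive measure near $t=0$ (the paper phrases this as ``$Y(t)$ is not constant''), and the degree count under the assumption of no isolated vertex and at most three degree-one vertices. Your parity refinement and the explicit boundary window $t^*=1/((n+1)\Delta)$ are correct and make that step quantitative, but they do not change the strength of the conclusion.

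The reason the method cannot reach the conjecture itself is structural: averaging only yields a time at which the overlap graph has at most $n-1$ edges on $n+1$ vertices, which forces the graph to be disconnected or to have several vertices of small degree. Disconnection gives an isolated \emph{set} of runners, which is exactly Proposition~\ref{prop:weakest}; small degrees give almost-alone runners, which is Theorem~\ref{thm:my_invisible}. Forcing a vertex of degree zero --- a genuinely lonely runner --- is precisely the open content of Conjecture~\ref{conj:weak}, and no refinement of the edge-count average of the kind you use is known to achieve it. Your write-up would be correct if presented as an alternative (but essentially identical) proof of Proposition~\ref{prop:invisibles}; as a proof of the Weak Lonely Runner Conjecture it does not go through.
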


For every set $S\subseteq [n]$, we say that $S$ is \emph{isolated at time $t$} if,
\begin{align}
 \| t(v_i-v_j)\|\geq \frac{1}{n} \qquad \text{for each $i\in S$, $j\in V\setminus S$.}
\end{align}
Observe that $S=\{i\}$ is isolated at time $t$, if and only $v_i$ is lonely at time $t$.

To study the appearance of isolated sets, it is convenient to define a dynamic graph $G(t)$, whose
connected components are sets of isolated runners at time $t$. For each $1\leq i\leq n$ and $t\in
\TT$, define the following
dynamic interval in $\TT$ associated to the $i$-th runner,
$$
I_i(t) = \left\{x\in \TT\;: \{ x-tv_i \}< \frac{1}{n}\right\}\;.
$$
In other words, $I_i(t)$ is the interval that starts at the position of the $i$-th
runner at time $t$  and has length $\frac{1}{n}$.

Now we can define the following dynamic circular interval graph  $G(t)=(V(t),E(t))$. The vertex set
$V(t)$ is composed by $n$ vertices $u_i$ that correspond to the set of runners, and two vertices
$u_i$ and $u_j$ are connected if and only if $I_i(t)\cap I_j(t)\neq \emptyset$ (see
Figure~\ref{fig:interval}).

\begin{figure}[ht]
 \begin{center}
 \includegraphics[width=\textwidth]{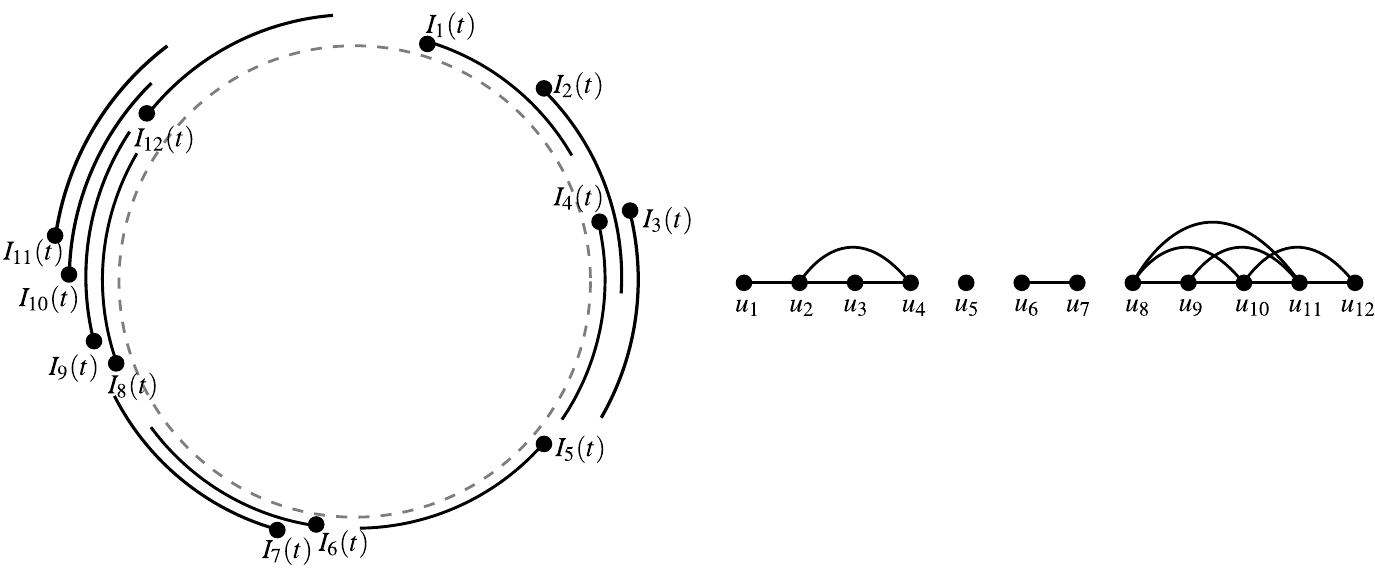}
 \end{center}
 \caption{An instance of the graph $G(t)$.}
 \label{fig:interval}
\end{figure}

\begin{observation}
 The graph $G(t)$ satisfies the following properties,
\begin{enumerate}
 \item $G(0) = K_n$.
 \item Each connected component of $G(t)$, correspond to an isolated set of
runners at time $t$.
 \item If $u_i$ is isolated in $G(t)$, then $v_i$ is alone at time $t$.
 \item All the intervals have the same size, $|I_i(t)|=1/n$, and thus, $G(t)$ is a unit circular
interval
graph.
\end{enumerate}
\end{observation}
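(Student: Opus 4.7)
Each of the four properties amounts to unwinding the definitions of $I_i(t)$ and $G(t)$, so my plan is to dispatch properties 1 and 4 directly from the definitions and then deduce properties 2 and 3 from one common edge criterion. Property 1 is immediate: at $t=0$ every $I_i(0)=\{x\in\TT:\{x\}<1/n\}=(0,1/n)$, so all $n$ intervals coincide, every pair intersects, and $G(0)=K_n$. Property 4 is equally direct from the definition, which exhibits $I_i(t)$ as a half-open arc of length exactly $1/n$ on the circle $\RR/\ZZ$; hence $G(t)$ is by construction the intersection graph of a family of equal-length circular arcs, i.e.\ a unit circular interval graph.

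For property 2 the main step is to establish the edge criterion
\[
u_iu_j\in E(t)\ \Longleftrightarrow\ \|t(v_i-v_j)\|<\tfrac{1}{n}.
\]
Writing $a=\{tv_i\}$ and $b=\{tv_j\}$, the sets $I_i(t)$ and $I_j(t)$ are the arcs of length $1/n$ on $\RR/\ZZ$ starting at $a$ and $b$ respectively, and they share a point iff either $b$ lies in the arc starting at $a$ or vice versa, i.e.\ iff $\{t(v_j-v_i)\}<1/n$ or $\{t(v_i-v_j)\}<1/n$; since $\|x\|=\min(\{x\},\{-x\})$, this is precisely $\|t(v_i-v_j)\|<1/n$. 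Comparing this criterion with the definition of ``isolated set'' given just before the observation, a set $S\subseteq[n]$ is isolated at time $t$ iff there is no edge of $G(t)$ between $S$ and $V(t)\setminus S$, so the isolated sets are exactly the unions of connected components of $G(t)$; in particular every connected component is an isolated set. Property 3 is then the $S=\{i\}$ case of the same calculation: $u_i$ having no neighbour in $G(t)$ means $\|t(v_i-v_j)\|\ge 1/n$ for every $j\neq i$, which is exactly what it means for $v_i$ to be alone at time $t$.

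The only genuine bookkeeping lies in the wrap-around case of the edge criterion, where one of the two arcs crosses $0$, and even this reduces to a short case analysis based on where $\{tv_i\}$ and $\{tv_j\}$ sit on $\RR/\ZZ$ together with the identity $\|x\|=\min(\{x\},\{-x\})$. As no other non-trivial step appears, I expect the whole verification to be essentially definitional.
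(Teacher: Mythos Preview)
Your proposal is correct. The paper states this Observation without proof, treating all four items as immediate from the definitions; your write-up supplies exactly the definitional unpacking one would expect, and the key edge criterion $u_iu_j\in E(t)\iff \|t(v_i-v_j)\|<1/n$ that you isolate is indeed the only point requiring any care (and your argument for it via $\|x\|=\min(\{x\},\{-x\})$ is fine, including the wrap-around case).
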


We can restate the Lonely Runner Conjecture in terms of the dynamic interval graph $G(t)$.
\begin{conjecture}[Lonely Runner Conjecture]
For every $i\leq n$ there exists a time $t$ such that $u_i$ is isolated in $G(t)$.
\end{conjecture}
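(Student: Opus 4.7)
The plan is to translate the statement into a form directly attackable by the techniques of Section~\ref{sec:correl} while exploiting the interval-graph picture just developed. Fix $i \in [n]$ and pass to the reference frame of runner $i$: replace each speed $v_j$ ($j\neq i$) by $w_j := v_j - v_i$. Then $u_i$ is isolated in $G(t)$ precisely when $I_i(t)\cap I_j(t)=\emptyset$ for every $j\neq i$, which, unwinding the definition of $I_j(t)$, is equivalent to $\|tw_j\|\geq 1/n$ for every $j\neq i$. Hence the conjecture is equivalent to applying Conjecture~\ref{conj:LRC} to the $n-1$ nonzero speeds $\{w_j : j\neq i\}$ with gap $\delta = 1/n$.

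Given this reduction, the concrete route I would pursue is to push the Bonferroni/second-moment machinery of Section~\ref{sec:correl} up to the critical value $\delta = 1/n$. For each $j\neq i$, let $A_j = \{t\in\TT : \|tw_j\| < 1/n\}$; the goal is to show $\Pr\bigl(\bigcap_{j\neq i}\overline{A_j}\bigr) > 0$. Combining Lemma~\ref{lem:bonf_local} with Lemma~\ref{prop:tree} already yields this whenever $\delta < 1/(2(n-1)-2+o(1))$, i.e.\ Theorem~\ref{thm:2} applied to the $w_j$. To close the remaining factor of roughly two, I would try to extract finer information from $G(t)$ itself: at $t=0$ the graph is $K_n$ and each edge incident to $u_i$ disappears and reappears at characteristic times governed by the individual $w_j$. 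One would try to bound the Lebesgue measure of the set $\{t : \deg_{G(t)}(u_i)=0\}$ from below by a higher-order inclusion--exclusion employing the explicit pairwise probabilities $\Pr(A_j\cap A_k)$ supplied by Proposition~\ref{prop:4delta2}, and, if tractable, the triple intersections $\Pr(A_j\cap A_k\cap A_\ell)$, whose analysis would extend the geometric computation already carried out in the proof of Proposition~\ref{prop:4delta2_coprime}.

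The main obstacle is immediate and structural: the conjecture above is a verbatim restatement of the Lonely Runner Conjecture, so any complete proof along these lines would settle Conjecture~\ref{conj:LRC} in full. Any route through $G(t)$ must therefore supply a genuinely new ingredient---either a favorable combinatorial identity relating the connectivity structure of $G(t)$ to the measure of the isolated set, or sharp signed control over higher-order correlations among the $A_j$, for which the sign pattern revealed by Lemma~\ref{lem:f_positive} and Lemma~\ref{lem:not_too_small} gives no obvious path down to the critical value $\delta = 1/n$. In the absence of such a new input, the best bound obtainable along these lines is the weaker $\delta = 1/(2n-4+o(1))$ that follows from Theorem~\ref{thm:2}, and the conjecture as stated must remain open in the same sense that Conjecture~\ref{conj:LRC} does.
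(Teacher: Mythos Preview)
Your analysis is correct: the statement in question is labeled a \emph{Conjecture} in the paper precisely because it is a verbatim reformulation of Conjecture~\ref{conj:LRC} in the language of the dynamic interval graph $G(t)$, and the paper offers no proof of it whatsoever. Your reduction (passing to the frame of runner $i$ and noting that $u_i$ is isolated in $G(t)$ iff $\|t(v_j-v_i)\|\ge 1/n$ for all $j\neq i$) is exactly the equivalence the paper is implicitly invoking when it writes ``We can restate the Lonely Runner Conjecture in terms of the dynamic interval graph $G(t)$,'' and your conclusion---that the best the machinery of Section~\ref{sec:correl} yields is the gap of Theorem~\ref{thm:2}, leaving the conjecture open---matches the paper's own stance.
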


Let $\mu$ be the uniform measure in the unit circle. For every subgraph $H\subseteq G(t)$ we define $\mu(H) = \mu(\cup_{u_i\in
V(H)} I_i(t))$, the length of the arc occupied by the intervals corresponding to $H$.
Notice that, if $H$   contains an edge, then
\begin{align}\label{eq:property_H}
\mu(H)<\frac{|V(H)|}{n}\;,
\end{align}
since the intervals $I_i(t)$ are closed in one extreme but open in the other one.
If $H$  consists of isolated vertices, then~\eqref{eq:property_H} does not hold.

The dynamic interval graph $G(t)$ allows us to prove a very weak version of the conjecture. Let us assume
that $v_1<v_2<\cdots <v_n$.

\begin{proposition}\label{prop:weakest}
 There exist a time $t\in \RR$ and a nonempty subset $S\subset [n]$ such that $S$ is isolated at
time $t$.
\end{proposition}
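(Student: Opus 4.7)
The plan is to show that the dynamic graph $G(t)$ has at least two connected components at some time $t\in\TT$; any union of some but not all of its components then serves as the desired proper nonempty isolated set $S$.

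To count components of $G(t)$ I would call a runner $i$ \emph{exposed at time $t$} when no other interval $I_j(t)$ covers a point immediately to the left of the left endpoint $tv_i$ of $I_i(t)$, i.e.\ when $t(v_i-v_j)\notin(0,1/n)\pmod{1}$ for every $j\neq i$. Each maximal arc of the union $\bigcup_i I_i(t)$ is opened by the left endpoint of a unique interval, so the number of exposed runners at time $t$ equals the number of connected components of $G(t)$.

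Next, a short averaging computation. For any pair $i\neq j$ the event $E_{ij}=\{t\in\TT:\ t(v_i-v_j)\in(0,1/n)\pmod{1}\}$ has measure $1/n$, since $v_i-v_j$ is a nonzero integer. By the union bound, $\Pr_t[i\text{ not exposed}]\le(n-1)/n$, so $\Pr_t[i\text{ exposed}]\ge 1/n$, and summing over $i$,
\[
\E_t\bigl[\text{number of components of }G(t)\bigr]=\sum_{i=1}^{n}\Pr_t[i\text{ exposed}]\geq 1.
\]

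The crux, and the only nontrivial step, will be to turn this into a strict inequality, since the number of components is a positive integer and we need an average strictly greater than one. For this I would pick $i^{*}$ to be the runner with the largest speed, so that $v_{i^{*}}-v_j>0$ for every $j\neq i^{*}$; then for all sufficiently small $t>0$ all the events $E_{i^{*}j}$ hold simultaneously. As soon as $n\geq 3$ any two of these events share a set of positive measure, so the union bound for $i^{*}$ is strict, $\Pr_t[i^{*}\text{ exposed}]>1/n$, and therefore $\E_t[\text{number of components}]>1$. The cases $n=1,2$ are immediate (for $n=2$, the time $t=1/(2|v_1-v_2|)$ makes the two intervals complementary and $G(t)$ disconnected). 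The only real obstacle is this strictness, and it is resolved by exploiting the positive overlap of the $E_{i^{*}j}$ near $t=0$.
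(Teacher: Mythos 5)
Your proof is correct, but it takes a genuinely different route from the paper. The paper's proof is constructive: it picks the explicit time $t$ at which the slowest runner is exactly $1/n$ ahead of the fastest (so $u_1u_n\notin E(G(t))$ and the intervals appear around the circle in speed order), then uses the measure bound~\eqref{eq:property_H} to find an uncovered point and splits the vertex set into two nonempty pieces, contradicting connectivity. You instead average over a uniformly random $t\in\TT$: identifying the components of $G(t)$ with ``exposed'' left endpoints, you get $\E\bigl[\#\text{components of }G(t)\bigr]=\sum_i \Pr[i\ \text{exposed}]\geq 1$ from the union bound, and you obtain the needed strict inequality because for the fastest runner the $n-1$ bad events $E_{i^*j}$ all overlap on a common interval near $t=0$, so (for $n\geq 3$) a two-term inclusion--exclusion beats the union bound; a first-moment argument then yields a time with at least two components, and any union of some but not all components is the desired $S$. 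Two remarks on the comparison: (i) your identity ``number of exposed runners $=$ number of components'' can fail at exceptional times (coincident positions, intervals abutting exactly, or the $n$ intervals tiling the circle), but these occur only for finitely many $t$, so the expectation computation is unaffected --- a sentence acknowledging this almost-everywhere caveat would make the argument airtight; (ii) the paper's proof works uniformly and exhibits an explicit time and an explicit bipartition (slow runners versus fast runners), while yours requires $n\geq 3$ plus the separate small cases, but in exchange shows the stronger fact that the set of times at which $G(t)$ is disconnected has positive measure, and it fits naturally with the correlation/averaging theme of Section~\ref{sec:correl}.
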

\begin{proof}
Let $t$ be the minimum number for which the equation $tv_n-1= tv_1-1/n$ holds.
This is the first time that the slowest runner $v_1$ is at distance exactly $1/n$ ahead from the fastest runner $v_n$.

For the sake of contradiction, assume that there is just one connected
component of order $n$.
Note that $u_1 u_n \notin E(G(t))$ and since $G(t)$ is connected, there exists a path in
$G(t)$ connecting $u_1$ and $u_n$.
By~\eqref{eq:property_H}, we have $\mu(G)< 1$. Thus, there is a point $x\in \TT$ such that
$x\notin I_i(t)$ for every $i\in [n]$.

Observe that, at   time $t$, all the intervals $I_i(t)$ are sorted in increasing order around
$\TT$.
Let $\ell\in [n]$ be such that $x>\{tv_{\ell}\}$ and $x<\{tv_{\ell+1}\}$. Then, $\{u_1,\dots,
u_{\ell}\}$ and $\{u_{\ell+1},\dots, u_n\}$ are in different connected components since $u_1 u_n,
u_{\ell} u_{\ell+1} \notin E(G(t))$.
\end{proof}
Observe that, if one of the parts in Proposition~\ref{prop:weakest} consists of a singleton, say
$S=\{i\}$, then Conjecture~\ref{conj:weak} would be true.

Let us show how to use the dynamic graph to prove an invisible lonely runner
theorem, similar to Theorem~\ref{thm:Inv}.
\begin{proposition}\label{prop:invisibles}
	There exists $t\in\TT$ such that $G(t)$ has either at least one isolated vertex or at least four vertices of degree one.
\end{proposition}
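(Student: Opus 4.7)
The plan is to extend the measure-based argument of Proposition~\ref{prop:weakest}. If some $t\in(0,1)$ produces an isolated vertex in $G(t)$, we are done. So assume not; then every component of every $G(t)$ has at least two vertices and thus contains an edge, so by~\eqref{eq:property_H},
$$
\mu\left(\bigcup_{i=1}^n I_i(t)\right)<1 \quad \text{for every } t\in(0,1),
$$
meaning $\bigcup_i I_i(t)$ always has at least one gap on the circle.

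Next I would look for a time $t^\star$ at which $\bigcup_i I_i(t^\star)$ has \emph{several} gaps, so that $G(t^\star)$ splits into enough connected components. A natural starting point is the critical time $t_0=(1-1/n)/(v_n-v_1)$ of Proposition~\ref{prop:weakest}, at which $I_n$ and $I_1$ become adjacent-but-disjoint and the graph is forced to split into at least two components. To obtain more gaps I would consider perturbations of $t_0$ and analogous critical times for other pairs of extreme speeds, such as $(v_{n-1},v_2)$, combining them to find a time where several pairs are simultaneously at distance $1/n$, so that several simultaneous gaps appear.

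Given such a $t^\star$ with $G(t^\star)$ having $k\geq 2$ components, each component is a connected unit interval graph supported on an arc. Sorting its intervals by start position as $J_1,\dots,J_m$, every middle interval overlaps its two arc-neighbours and so has degree at least $2$; hence only the extremes $J_1$ and $J_m$ can have degree one. The extreme $J_1$ has degree at least $2$ precisely when the third interval $J_3$ starts within $1/n$ of $J_1$'s start, i.e., when three intervals are bunched together at the component boundary. Because~\eqref{eq:property_H} forces $\mu(G(t^\star))<1$, too many such ``dense boundaries'' cannot coexist around the circle, so at least four extremes must in fact be degree-one vertices.

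The main obstacle will be quantifying the measure consumed by dense boundaries and combining this with the existence of enough components to rule out the case where all extremes have degree $\geq 2$. I expect the proof to require a careful choice of $t^\star$---perhaps the time at which the number of components is maximal, or one capturing several simultaneous critical-distance conditions---together with an accounting argument relating $k$, the sizes $|V(C_j)|$, and the measures $\mu(C_j)$ via~\eqref{eq:property_H}.
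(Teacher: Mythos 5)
There is a genuine gap here: what you have is a plan with its two crucial steps missing, and neither step is easy to repair. First, the existence of a time $t^\star$ at which the union $\bigcup_i I_i(t^\star)$ has \emph{several} gaps is never established. The critical times at which different pairs of runners sit at distance exactly $1/n$ are determined by different speed differences and in general do not coincide, so ``combining'' the critical time for $(v_n,v_1)$ with the one for $(v_{n-1},v_2)$ is not an argument; Proposition~\ref{prop:weakest} only guarantees one extra gap at one specific time. Second, even if you had a time with $k\geq 2$ components, the final claim --- that at least four component-extremes have degree one --- does not follow from $\mu(G(t^\star))<1$. For example, two components each consisting of three intervals bunched inside an arc of length barely over $1/n$ have every vertex of degree $2$, occupy total measure only slightly more than $2/n$, and exhibit no isolated vertex and no degree-one vertex at that time; so the ``dense boundaries cannot coexist'' heuristic is false as stated, and the needed accounting is exactly the hard part you defer.

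The paper avoids all of this geometry with a one-line averaging argument that you should note, since it is the missing key idea. Let $Y(t)=|E(G(t))|$ and pick $t\in(0,1)$ uniformly at random. For any pair $i<j$ one may shift to $v_i=0$, and because the intervals have length exactly $1/n$ and are half-open, $\Pr\bigl(I_i(t)\cap I_j(t)\neq\emptyset\bigr)=2/n$ regardless of the speeds; hence
\begin{equation*}
\E\bigl(Y(t)\bigr)=\binom{n}{2}\cdot\frac{2}{n}=n-1 .
\end{equation*}
Since $Y$ is not constant, some $t_0$ satisfies $Y(t_0)\leq n-2$, i.e.\ $\sum_i d_i\leq 2n-4$. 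If $G(t_0)$ has no isolated vertex then every $d_i\geq 1$, and writing $a$ for the number of degree-one vertices gives $a+2(n-a)\leq 2n-4$, so $a\geq 4$. This global first-moment count is what replaces your attempt to locate a single structurally favourable time and to control components individually; without it (or some equally quantitative substitute), your proposal does not yet constitute a proof.
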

\begin{proof}
	Define $Y : \TT \to \mathbb{N}$  by,
	$$
	Y(t)=|E(G(t))| \;.
	$$
Let $t\in \TT$ be chosen uniformly at random. Then $Y(t)$ is a random variable over $
\left\{0,1,\dots, \binom{n}{2}\right\}$.
	We will show that $\E(Y(t))= n-1$. If we are able to do so, since $Y(t)$ is not constant, by a first moment
argument, we know that there exists a time $t_0$ for which $Y(t_0)\leq  n-2$. Then, denoting by $d_i$ the degree of $u_i$, we have
$$
\sum_{i=1}^n d_i \leq 2(n-2)\;.
$$
Suppose that there are no isolated vertices. Then $d_i>0$ for $i$ and this ensures the existence of at least $4$ vertices of degree one, concluding the proof of the proposition.

Now, let us show that $\E(Y(t))\leq (n-1)$. We can write
$Y(t)=\sum_{i<j} Y_{ij}(t)$, where $Y_{ij}(t)=1$ if $u_i$ and $u_j$ are connected at time $t$
and $Y_{ij}(t)=0$ otherwise. Then $\E(Y(t))= \sum_{i<j} \E(Y_{ij}) = \sum_{i<j} \Pr(I_i(t)\cap
I_j(t)\neq \emptyset)$.
	For the sake of simplicity when computing $\Pr(I_i(t)\cap I_j(t)\neq \emptyset)$, we can
assume that $v_i=0$. Since the intervals are half open, half closed, we have $\Pr(I_i(t)\cap
I_j(t)\neq \emptyset)=2/n$, no matter the value of $v_j$.

Finally,
	$$
	\E(Y(t))= \sum_{i<j} \frac{2}{n} = \binom{n}{2}\frac{2}{n}=n-1 \;.
	$$
\end{proof}

In the dynamic interval graph setting, an invisible runner is equivalent to a vertex $u$ with a
neighbor of degree one, say $v$. If $u$ is removed, then $v$ becomes isolated in $G(t)$ and thus,
alone in the runner setting. Thus, Theorem~\ref{thm:my_invisible} is a direct corollary of
Proposition~\ref{prop:invisibles}.

\section{Concluding remarks and open questions}\label{sec:consec_lonely}

\textbf{1.}
In Proposition~\ref{prop:variance} we gave a lower bound for $\E(X^2)$. We believe that this proof
can be adapted to show that $\E(X^2)$ is larger.
\begin{conjecture}
	For every set of different speeds $v_1,\dots, v_n$, and every $\delta<1$, we
have
	$$
	\E(X^2)\geq (1+o(1))4\delta^2 n^2+ 2\delta n\;.
	$$
\end{conjecture}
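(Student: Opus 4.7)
The proof I propose starts by expanding, via Proposition~\ref{prop:4delta2},
$$
\E(X^2) = 2\delta n + 4\delta^2 n(n-1) + 2\sum_{i \neq j} \frac{\gcd(v_i,v_j)^2 f(\eps_{ij},\eps_{ji})}{v_i v_j},
$$
so that the conjecture reduces to showing that the last sum is $-o(\delta^2 n^2)$ as $n\to\infty$.

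The first step is to choose $\gamma=\gamma(n)\to 0$ slowly, in contrast with the proof of Proposition~\ref{prop:variance} where $\gamma$ was a fixed constant. For every ordered pair $(j,i)$ with $v_j<v_i$ satisfying either $v_j\ge (1-\gamma)v_i$ or $v_j\le \gamma\delta v_i$, Lemma~\ref{lem:not_too_small} bounds the correction term below by $-\gamma\delta^2$. Summing over all such pairs contributes at least $-\gamma\delta^2\binom{n}{2}=-o(\delta^2 n^2)$, which is admissible. The problem is thus reduced to controlling the contribution from the \emph{intermediate-ratio} pairs, i.e.\ those with $\gamma\delta v_i<v_j<(1-\gamma)v_i$.

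For these pairs, the uniform estimate $\Pr(A_i\cap A_j)\ge 2\delta^2$ from Corollary~\ref{cor:simple} only yields a correction of at least $-2\delta^2$ each. It suffices to show either that the number $B$ of intermediate-ratio pairs satisfies $B=o(n^2)$, or, more subtly, that the total correction they contribute is $-o(\delta^2 n^2)$. The first alternative cannot be extracted from Lemma~\ref{lem:aux} alone, since that lemma combined with Erd\H os--Stone only gives $B\le (1-\Omega(1/\log \delta^{-1}))\binom{n}{2}$. To push further, one would want to exploit the refined magnitude estimate
$$
\left|\frac{2\gcd(v_i,v_j)^2 f(\eps_{ij},\eps_{ji})}{v_i v_j}\right|\le \frac{2}{v_i' v_j'},
$$
where $v_i'=v_i/\gcd(v_i,v_j)$ and $v_j'=v_j/\gcd(v_i,v_j)$, and to argue that any speed set forcing small $v_i' v_j'$ on many pairs must have a rigid divisibility structure that prevents $f(\eps_{ij},\eps_{ji})$ from being simultaneously maximally negative on all of them.

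The main obstacle, and the reason this remains only a conjecture, lies in the intermediate-ratio regime: an adversary can choose speeds such as $\{1,2,\dots,n\}$, for which $\Theta(n^2)$ pairs fall into this range and the naive number-theoretic estimate of $\sum_{i\neq j}\gcd(v_i,v_j)^2/(v_i v_j)$ is $\Theta(n^2)$, comparable in order to the main term $4\delta^2 n^2$. Overcoming this would likely require either a Fourier-analytic approach, expressing $\E(X^2)$ in terms of the Fourier coefficients of the indicator of $(-\delta,\delta)$ and exploiting cancellations among signed corrections, or an equidistribution argument showing that the pairs $(\eps_{ij},\eps_{ji})$ are sufficiently well spread in $[0,1)^2$ so that positive and negative values of $f$ balance out on average.
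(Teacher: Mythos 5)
You have not proved this statement, and neither does the paper: it appears in Section~\ref{sec:consec_lonely} explicitly as an open conjecture, accompanied only by the authors' remark that a proof would require showing that either most pairs are $\eps$--good or the positive error terms outweigh the negative ones. Your reduction --- expand $\E(X^2)$ via Proposition~\ref{prop:4delta2}, take $\gamma=\gamma(n)\to 0$ slowly so that Lemma~\ref{lem:not_too_small} makes the extreme-ratio pairs contribute only $-o(\delta^2 n^2)$, and then isolate the pairs with $\gamma\delta v_i<v_j<(1-\gamma)v_i$ --- is a faithful sharpening of the proof of Proposition~\ref{prop:variance} and matches the authors' own description of what is missing. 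But the decisive step, showing that the intermediate-ratio pairs cannot contribute a total of $-\Omega(\delta^2n^2)$, is exactly what you concede you cannot do; that step is the entire content of the conjecture, so what you have is a (reasonable) plan with the central gap left open, not a proof.

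One factual correction to your discussion of the obstacle: the set $v_i=i$ is not the adversarial example you suggest. For those speeds $\sum_{i<j}\gcd(v_i,v_j)^2/(v_iv_j)=\Theta(n)$, not $\Theta(n^2)$: writing $i=da$, $j=db$ with $\gcd(a,b)=1$, the inner sum of $1/(ab)$ over $a<b\le n/d$ is $O\left(\log^2(n/d)\right)$, and summing over $d\le n$ gives $\Theta(n)$. Moreover, by Cilleruelo's estimate~\eqref{eq:cille} quoted in the paper, for these speeds with $\delta=\Theta(n^{-1})$ one has $\E(X^2)=\Theta(\delta n\log n)$, which exceeds $4\delta^2n^2=\Theta(1)$ by a logarithmic factor; the correction terms there are strongly positive in aggregate, so the conjectured bound holds comfortably for consecutive speeds. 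The genuine difficulty is for arbitrary speed sets: neither Lemma~\ref{lem:aux} combined with Erd\H{o}s--Stone (which, as in Proposition~\ref{prop:variance}, only upgrades a $1/\log\delta^{-1}$ fraction of the pairs to $\eps$--good) nor your magnitude bound $2/(v_i'v_j')$ rules out that the negative values of $f(\eps_{ij},\eps_{ji})$ on $\Theta(n^2)$ intermediate-ratio pairs sum to $-\Omega(\delta^2n^2)$, and no argument in the paper does either.
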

The proof of this conjecture relies on showing that either most pairs are $\eps$--good or the
contribution of the positive error terms is larger than the contribution of the negative ones. On
the other hand, notice that it is not true that $\E(X^2)$ is $O(\delta^2 n^2)$. For the set of
speeds in~\eqref{eq:speeds}, Cilleruelo~\cite{cille} has shown that
\begin{align}\label{eq:cille}
	\E(X^2)= (1+o(1))\frac{12}{\pi^2}\delta n\log{n}\;,
\end{align}
which is a logarithmic factor away from the lower bound in Proposition~\ref{prop:variance},
when $\delta= \Theta(n^{-1})$. It is an open question whether~\eqref{eq:cille} also holds as an
upper bound for $\E(X^2)$.

\textbf{2.}
Ideally, we would like to estimate the probabilities $\Pr(\cap_{i\in S}A_i)$, for every set
$S\subseteq [n]$ of size $k$. In general, it is not easy to compute such probability. As
in~\eqref{eq:cille}, the sum of the $k$--wise join probabilities are not always of the form
$O(\delta^k n^k)$. However it seems reasonable to think that, for every set $S$,
we have
$$
\Pr(\cap_{i\in S}A_i) \geq c_k \delta^k\;,
$$
where $c_k$ depends only on $k$. Moreover, we know that $c_k\leq 2^k$, since this is the case when
the speeds $\{v_i\}_{i\in S}$ are rationally independent and the conjecture holds   (see e.g.
Horvat and Stoffregen \cite{horvat2011}.)

The inequality~\eqref{eq:first bound} shows that $c_2=2$.
In general, we also conjecture that
$$
c_k= (1+o_k(1))2^k\;.
$$

\textbf{3.}
Below we give a short proof of the result by Chen and Cusick \cite{cc1999} improving the bound on the lonely runner problem with $n$ runners when $p=2n-3$ is a prime number.

\begin{proposition} If $2n-3$ is a prime number then, for every set of $n$ speeds, there exists a time $t\in \RR$ such
that
  $$
\| tv_i\|\geq\frac{1}{2n-3}\;,
  $$
for every $i\in [n]$.

\end{proposition}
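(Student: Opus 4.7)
My plan is to prove the proposition by reducing the problem modulo $p=2n-3$ and exploiting the primality of $p$. Set $\delta=1/p$ and, by scaling, assume $\gcd(v_1,\dots,v_n)=1$. Let $Z=\{i\in[n]:p\mid v_i\}$; since $\gcd=1$ we have $Z\ne[n]$.

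\emph{Easy case ($Z=\emptyset$).} Take $t=1/p$. For each $i$, $v_i\bmod p\in\{1,\dots,p-1\}$, so $\|tv_i\|=\min(v_i\bmod p,\,p-v_i\bmod p)/p\ge 1/p$. This is where the primality of $p$ is first used: $p\nmid j$ together with $p\nmid v_i$ gives $p\nmid jv_i$, so $jv_i\bmod p$ is never $0$.

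\emph{Hard case ($Z\ne\emptyset$).} No test point of the form $j/p$ works, since every runner in $Z$ returns to the origin. I would parametrize $t=(j+r)/p$ with $j\in\{0,1,\dots,p-1\}$ and $r\in[0,1)$. Writing $v_i=pu_i$ for $i\in Z$ and $a_i(j)=jv_i\bmod p$ for $i\notin Z$, the constraint $\|tv_i\|\ge 1/p$ decouples as $\|ru_i\|\ge 1/p$ (depending only on $r$) for $i\in Z$ and $\|(a_i(j)+rv_i)/p\|\ge 1/p$ for $i\notin Z$. The first set of constraints is a lower-dimensional lonely-runner sub-problem on $|Z|\le n-1$ speeds with gap $1/p$; since $2|Z|/p<1$ when $|Z|\le n-1$ and $n\ge 4$, the union bound produces a positive-measure set $R\subseteq[0,1)$ of admissible $r$.

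For each $i\notin Z$ and each fixed $r$, the set of bad $j\in\{1,\dots,p-1\}$ is the preimage under the bijection $j\mapsto a_i(j)$ of a length-$2$ arc in $\mathbb{Z}_p$ (centered at $-rv_i\bmod p$), hence has at most two elements. Summing over $i\notin Z$ gives at most $2(n-|Z|)$ bad $j$'s for any fixed $r$. When $|Z|\ge 3$, $2(n-|Z|)<p-1=2n-4$, so some $j$ is good for the $Z^c$-constraints simultaneously with any $r\in R$; this finishes this sub-case by elementary counting.

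\emph{Main obstacle.} The delicate cases are $|Z|\in\{1,2\}$, where the naive count $2(n-|Z|)$ matches or exceeds $p-1$. Here I would combine the two paper-internal tools with the \cite{bs2008} technique: apply Lemma~\ref{lem:bonf_local} to the sub-problem in the $j$-variable for fixed $r$, using the pairwise join probabilities from Proposition~\ref{prop:4delta2} (which take a particularly clean form when $\delta=1/p$, since $\eps_{ij}\in\tfrac{1}{p}\mathbb{Z}$). The key extra input from \cite{bs2008} is the field structure of $\mathbb{Z}_p$: interpreting the $v_i\bmod p$ as a distance set and using a regular-coloring argument on the corresponding circular distance graph forces the length-$2$ bad arcs of different $i\notin Z$ to overlap enough that their union cannot exhaust $\{1,\dots,p-1\}$. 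This gives a good $(j,r)\in\{1,\dots,p-1\}\times R$ and completes the proof. I expect this last step — showing that primality rules out the pathological alignment of bad arcs for $|Z|\in\{1,2\}$ — to be the main technical hurdle, as it is precisely where the bound $1/(2n-3)$ (rather than the weaker $1/(2n-2+o(1))$ of Theorem~\ref{thm:2}) uses the hypothesis that $2n-3$ is prime.
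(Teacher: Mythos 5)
Your reduction $t=(j+r)/p$ is fine as far as it goes, but it only disposes of the easy range of the problem, and the case you label the ``main obstacle'' is precisely where all the content of the statement lies --- and there you give no proof. When $|Z|\in\{1,2\}$ your count of bad residues, $2(n-|Z|)\geq 2n-4=p-1$, exhausts (or exceeds) the available $j\in\{1,\dots,p-1\}$, and your proposed fix --- invoking Lemma~\ref{lem:bonf_local}, Proposition~\ref{prop:4delta2} ``in the $j$-variable'', and a ``regular-coloring argument'' from \cite{bs2008} forcing the bad arcs to overlap --- is a gesture, not an argument: Proposition~\ref{prop:4delta2} concerns the continuous uniform measure on $\TT$, not the discrete multiplier $j$ at a fixed $r$, and no lemma is stated or proved showing that primality forces the $\leq 2(n-|Z|)$ bad residues to collide. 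This is exactly the case the paper works hardest on: in its $p$-adic formulation the critical situation is $|V_0|=n-1$, i.e.\ exactly one speed divisible by $p$ (your $|Z|=1$), and it is resolved by applying Hunter's inequality to multipliers $\lambda\in\ZZ_N^*$ together with an explicit combinatorial construction (the sets $C_j$) yielding the quantitative pairwise bound $\Pr(A_1\cap A_i)>2/p(p-1)$. Some input of that quantitative kind is unavoidable, and your proposal does not supply it, so the proof is incomplete at its decisive step.

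There is also a concrete arithmetic slip: with $p=2n-3$, the union bound over $r$ requires $2|Z|/p<1$, i.e.\ $|Z|\leq n-2$, whereas you claim it for $|Z|\leq n-1$; if $|Z|=n-1$ then $2|Z|/p=(2n-2)/(2n-3)>1$ and your set $R$ of admissible $r$ is not guaranteed to be nonempty, so that case escapes your argument as well (one would have to recurse on the sub-problem in $Z$, which your sketch does not do). For comparison, the paper avoids both issues by working in $\ZZ_{p^{m+1}}$ with $m$ the maximal $p$-adic valuation of the speeds and fixing the top digit $\pi_m(tv_i)$ greedily, which handles every configuration with at least two speeds divisible by $p$ in one stroke (Case~1), leaving only the single hard case described above.
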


\begin{proof} Let $p=2n-3$. We may assume $p>7$. For a positive integer $x$ let $\nu_p(x)$ denote  the smallest power of $p$ in the $p$--adic expansion of $x$. Set $m=\max_i \nu_p(v_i)$ and $N=p^{m+1}$. We consider the problem in the cyclic group $\ZZ_N$. We will show   that there is $t\in \ZZ_N$ such that the circular distance to the origin of $tv_i$ to $0$ in $\ZZ_N$, denoted by $\|tv_i\|_N$, is at least $p^m=N/p$. This clearly implies that $\| tv_i\|\ge 1/p$ and proves the Proposition.

If $m=0$ then we have $\|v_i\|_p\ge 1$ for each $i$ and there is nothing to prove, so assume $m>0$. 

For each positive integer $x$ we denote by $\pi_j(x)$ the coefficient of $p^j$ in the $p$--adic expansion of the representative in $\{0,1,\ldots ,N-1\}$ of $x$ modulo $N$. We seek a certain $t$ such that,  for each $i$,   $\pi_m(tv_i)$   does not belong to $\{0,p-1\}$. This  implies that $\|tv_i\|_N\ge p^m$ for each $i$, which is our goal.

Partition the set $V=\{v_1,\ldots ,v_n\}$ of speeds into the sets $V_j=\{v_i: \nu_p(v_i)=j\}$, $j=0,1,\ldots ,m$. Since $\gcd (V)=1$ we have $A_0\neq \emptyset$. By the definition of $m$, we also have $V_m\neq \emptyset$. We consider two cases:

{\it Case 1.\/} $|V_0|<n-1$. This implies $|V_j|<n-1$ for each $j$.

For each  $\lambda\in \{1,\ldots ,p-1\}$ and each $v_i\in V_m$ we have $\pi_m(\lambda v_i)=\lambda \pi_m(v_i) \pmod p$, which is nonzero. By pigeonhole, there is $\lambda$  such that $\pi_m (\lambda v_i)\not\in \{ 0,p-1\}$ for each $v_i\in V_m$. (Actually, for speeds in $V_m$ it is enough that $\pi_m (\lambda v_i)\neq 0$.)

Suppose that there is $\lambda_{j+1} \in \{0,1,\ldots ,p-1\}$ such that $\pi_m (\lambda_{j+1} v_i)\not\in \{0,p-1\}$ for each $v_i\in  V_{j+1}\cup V_{j+2}\cup \cdots \cup V_m$ and some  $j+1\le m$. Since $|V_j|< n-1$, there is $\mu\in \{0,1,\ldots ,p-1\}$ such that 
$$
\pi_m((1+\mu p^{m-j})v_i)=\pi_m(v_i)+\mu\pi_j(v_i) \pmod p
$$
does not belong to $\{0,p-1\}$ for each $v_i\in V_j$. Moreover,  for each $v_i\in \cup_{l>j}V_l$,
$$
\pi_m((1+\mu p^{m-j})\lambda_{j+1}v_i)=\pi_m(\lambda_{j+1}v_i).
$$ 
Therefore, by setting $\lambda_j=(1+\mu p^{m-j})\lambda_{j+1}$, we have $\pi_m(\lambda_jv_i)$ not in $\{0,p-1\}$ for each $v_i\in \cup_{l\ge j}V_l$. Hence the sought multiplier is $t=\lambda_0$.

{\it Case 2.\/} $|A_0|=n-1$. Thus $V=V_0\cup V_m$ with $|V_m|=1$, say $V_m=\{ v_n\}$. For each $\lambda\in \ZZ_N^*$ we have $\|\lambda v_n\|_N\ge p^m$. We may assume that $v_1=1 \pmod N$. Let 
$$
A_i=\{\lambda\in \ZZ_N^*: \|\lambda v_i\|_N<p^m\}
$$
denote the set of bad multipliers for $v_i$. By choosing a multiplier in $\ZZ_N^*$ uniformly at random  we have 
$$
\Pr (A_i)=2p^{m-1}(p-1)/p^m(p-1)=2/p.
$$

If we show that $\Pr (A_1\cap A_i)> 2/p(p-1)$ for each $i=2,\ldots ,n-1$ then, by using Hunter's inequality \ref{lem:bonf_local}, 
$$
\Pr (\cap_{i=1}^{n-1} \overline{A_i})\ge 1-\Pr (\cup_{i=1}^n A_i)\ge 1-\left(\sum_{i=1}^{n-1} \Pr (A_i)-\sum_{i=2}^{n-1} \Pr (A_1\cap A_j)\right)>1-\left(1+\frac{1}{p}-\frac{p-1}{2}\frac{2}{p(p-1)}\right)=0,
$$
which implies  that there is $\lambda\in \ZZ_N^*$ such that $\|\lambda v_i\|_N\ge p^m$ for all $i$, concluding the proof.

Suppose $m\ge 2$. Consider the set $C=[0,p^m]\cap\ZZ_N^*$, so that $\lambda\in A_i$ if and only if $\lambda v_i\in C\cup (-C)$. The pairwise disjoint sets 
$$
C_j=\{jp+1,2(jp+1),\ldots ,(p-1)(jp+1)\},\;  j=0,1,\ldots ,p^{m-2}+1,
$$ 
satisfy
$$
C_j\subset C, \;\;  (C_j-C_j)\setminus \{0\}=  C_j\cup (-C_j)\;\;\text{and}\;\;    (C_j-C_j)\cap  (C_{j'}-C_{j'})=\{0\}, j\neq j'.
$$
The first   inclusion holds because no two elements in $C_j$ are congruent modulo $p$ and the largest element in $C_j$ is at most $(p-1)(p^{m-1}+p+1)=p^{m}-p^{m-1}+p^2-1< p^{m}$ if $m\ge 3$ and $(p-1)(p+1)=p^2-1$ if $m=2$. The  last two equalities   clearly hold.
 
Fix $i\in \{2,\ldots n-1\}$. We have $\lambda v_i\in \ZZ_N^*$ for each $\lambda\in C_j$ and each $C_j$.  By pigeonhole, if $\lambda v_i\not\in C\cup (-C)$ for each $\lambda\in C_j$ then there  are distinct   $\lambda, \lambda'\in C_j$ such that $\lambda v_i, \lambda'v_i\in C+kp^m$ for some $k\in \{1,2,\ldots ,p-2\}$. Therefore $(\lambda-\lambda')v_i\in C\cup (-C)$. Thus, for each $j$ there is $\mu \in (C_j-C_j)\setminus \{0\}$ such that both $\mu v_i, -\mu v_i\in C\cup (-C)$. Since $(C_j-C_j)\cap (C_{j'}-C_{j'})=\{ 0\}$, we have 
$$
\Pr (B_1\cap B_i)\ge \frac{2(p^{m-2}+2)}{p^{m-1}(p-1)}>\frac{2}{p(p-1)}.
$$

It remains to consider the case $m=1$. In this case  the same argument as above with an only $C_0=\{1,2,\ldots ,p-1\}$ suffices to give  $\Pr (B_1\cap B_i)\ge 1/(p-1)$. 
\end{proof}

\begin{acknowledgement}
The authors would like to thank Jarek Grytczuk for suggesting the use of dynamic graphs in the last part of the paper.
This research was supported by the Spanish Research Council under project MTM2008-06620-C03-01 and by the Catalan Research Council under grant
2014 SGR 1147.
\end{acknowledgement}

\bibliography{lonely}
\bibliographystyle{amsplain}

\end{document}